\author{
  Rasmus Kyng%
\thanks{Supported by ONR grant N00014-17-1-2127.}\\
  \texttt{kyng@seas.harvard.edu}\\
  Harvard University
  \and
  Zhao Song\\
  \texttt{zhaos@seas.harvard.edu}\\
  Harvard University \& UT-Austin
}
\date{}
\title{A Matrix Chernoff Bound for Strongly Rayleigh Distributions and
  Spectral Sparsifiers from a few Random Spanning Trees}
\newtheorem{theorem}{Theorem}[section]
\newtheorem{lemma}[theorem]{Lemma}
\newtheorem{definition}[theorem]{Definition}
\newtheorem{corollary}[theorem]{Corollary}
\newtheorem{fact}[theorem]{Fact}
\newtheorem{remark}[theorem]{Remark}
\newtheorem{claim}[theorem]{Claim}
\newcommand{\abs}[1]{|#1|}
\newcommand{\wh}{\widehat}
\newcommand{\wt}{\widetilde}
\newcommand{\eps}{\epsilon}
\newcommand{\R}{\mathbb{R}}
\newcommand{\norm}[1]{\left\lVert#1\right\rVert}
\renewcommand{\varepsilon}{\epsilon}
\renewcommand{\hat}{\wh}
\DeclareMathOperator*{\E}{{\mathbb{E}}}
\DeclareMathOperator{\supp}{supp}
\DeclareMathOperator{\poly}{poly}
\DeclareMathOperator{\tr}{tr}
\DeclareMathOperator{\wdeg}{wdeg}
\newcommand{\todolow}[1]{\textbf{\color{yellow}[TODO: #1]}}
\renewcommand{\todolow}[1]{}
\newcommand{\Rasmus}[1]{\textbf{\color{red}[Rasmus: #1]}}
\renewcommand{\Rasmus}[1]{}
\newcommand*{\RN}[1]{\expandafter\@slowromancap\romannumeral #1@}
\newcommand{\define}[4][ignore]{%
  \ifstrequal{#1}{ignore}{}{
  \@namedef{thmtitle@#2}{#1}}%
  \@namedef{thm@#2}{#4}%
  \@namedef{thmtypen@#2}{lemma}%
  \newtheorem{thmtype@#2}[theorem]{#3}%
  \newtheorem*{thmtypealt@#2}{#3~\ref{#2}}%
}
\newcommand{\state}[1]{%
  \@namedef{curthm}{#1}
  \@ifundefined{thmtitle@#1}{
  \begin{thmtype@#1}
    }{
  \begin{thmtype@#1}[\@nameuse{thmtitle@#1}]
  }
    \label{#1}
    \@nameuse{thm@#1}
  \end{thmtype@#1}
  \@ifundefined{thmdone@#1}{
  \@namedef{thmdone@#1}{stated}%
  }{}
}
\newcommand{\restate}[1]{%
  \@namedef{curthm}{#1}
  \@ifundefined{thmtitle@#1}{
    \begin{thmtypealt@#1}
    }{
  \begin{thmtypealt@#1}[\@nameuse{thmtitle@#1}]
  }
    \@nameuse{thm@#1}
  \end{thmtypealt@#1}
  \@ifundefined{thmdone@#1}{
  \@namedef{thmdone@#1}{stated}%
  }{}
}
\newcommand{\thmlabel}[1]{
  \@ifundefined{thmdone@\@nameuse{curthm}}{\label{#1}
    }{\tag*{\eqref{#1}}}
}
\begin{document}

\begin{titlepage}
  \maketitle
  \begin{abstract}

Strongly Rayleigh distributions are a class of negatively dependent distributions of binary-valued random variables
 [Borcea, Br{\"a}nd{\'e}n, Liggett JAMS 09].
Recently, these distributions have played a crucial role in the analysis of algorithms for fundamental graph problems,
e.g. Traveling Salesman Problem [Gharan, Saberi, Singh FOCS 11].
We prove a new matrix Chernoff bound for Strongly Rayleigh distributions.

As an immediate application, we show that adding together the Laplacians of $\epsilon^{-2} \log^2 n$ random spanning trees gives an $(1\pm \epsilon)$ spectral sparsifiers of graph Laplacians with high probability.
Thus, we positively answer an open question posed in [Baston, Spielman, Srivastava, Teng JACM 13]. Our number of spanning trees for spectral sparsifier matches the number of spanning trees required to obtain a cut sparsifier in [Fung, Hariharan, Harvey, Panigraphi STOC 11].
The previous best result was by naively applying a classical matrix Chernoff bound which requires $\epsilon^{-2} n \log n$ spanning trees.
For the tree averaging procedure to agree with the original graph Laplacian in expectation, each edge of the tree should be reweighted by the inverse of the edge leverage score in the original graph.
We also show that when using this reweighting of the edges, the Laplacian of single random tree is bounded above in the PSD order by the original graph Laplacian times a factor  $\log n$ with high probability, i.e. 
$L_T \preceq O(\log n) L_G$.

We show a lower bound that almost matches our last result, namely that in some graphs, with high probability, the random spanning tree is $\emph{not}$ bounded above in the spectral order by $\frac{\log n}{\log\log n}$ times the original graph Laplacian.
We also show a lower bound that in  $\epsilon^{-2} \log n$ spanning trees are necessary to get a $(1\pm \epsilon)$ spectral sparsifier.






  \end{abstract}
  \thispagestyle{empty}
\end{titlepage}

{\hypersetup{linkcolor=black}
\tableofcontents
}
\newpage




\section{Introduction}

The study of concentration of sums of random variables dates back
to Central Limit Theorems, and hence de Moivre and Laplace
\cite{t04},
while modern concentration bounds for sums of random variables were perhaps first
established by Bernstein \cite{b24}, and a popular variant now known
as Chernoff bounds was introduced by Rubin and published by
Chernoff \cite{c52}.

Concentration of measure for matrix-valued random variables is the
phenomenon that many matrix valued distributions are to close their
mean with high probability, closeness usually being measured by
spectral norm.
Modern quantitative bounds of the form often used in theoretical
computer science were derived by Rudelson \cite{r99},
while Ahlswede and Winter \cite{aw02} established a useful
matrix-version of the Laplace transform that plays a central role in
scalar concentration results such as those of Bernstein.
\cite{aw02} combined this with the Golden-Thompson trace inequality to prove
matrix concentration results. 
Tropp refined this approach, and by replacing the use of
Golden-Thompson with deep a theorem on concavity of certain trace functions due to Lieb, Tropp was able to
recover strong versions of a wide range of scalar concentration
results, including matrix Chernoff bounds, Azuma and Freedman's
inequalities for matrix martingales \cite{t12}.

Matrix concentration results have had an enormous range of applications
in computer science, and are ubiquitous throughout spectral graph
theory \cite{ST04,ss11,ckpprsv17}, sketching \cite{c16}, approximation algorithms \cite{hsss16}, and deep learning \cite{zsjbd17,zsd17}.
Most applications are based on results for independent random
matrices, but more flexible bounds, such as Tropp's Matrix Freedman
Inquality \cite{t11a}, have been used to greatly simplify algorithms,
e.g. for solving Laplacian linear equations~\cite{ks16} and for semi-streaming
graph sparsification \cite{ag09,kpps17}.
Matrix concentration results are also closely related to other popular
tools sampling tools, such as Karger's
techniques for generating sparse graphs that approximately preserve
the cuts of denser graphs \cite{bk96}.

Negative dependence of random variables is an appealing property that
intuition suggests should help with concentration of measure.
Notions of negative dependence can be formalized in many ways.  
Roughly speaking, these notions characterize distributions where 
where some event occurring ensures that other events of interest become
less likely.
A simple example is the distribution of a sequence of coin
flips, conditioned on the total number of heads in the outcome. 
In this distribution, conditioning on some coin coming out heads makes
all other coins less likely to come out heads.
Unfortunately, negative dependence phenomena are not as robust as
positive association which can be established from local conditions
using the powerful FKG theorem~\cite{fkg71}.

Strongly Rayleigh distributions were introduced recently by Borcea,
Br{\"a}nd{\'e}n, and Liggett \cite{bbl09} as a class of negatively
dependent distributions of binary-valued random variables with many
useful properties.
Strongly Rayleigh distributions satisfy useful negative dependence
properties, and retain these properties under natural
conditioning operations.
Strongly Rayleigh distributions also satisfy a powerful stability
property under conditioning known as \emph{Stochastic Covering} \cite{pp14}, which is
useful for analyzing them through martingale techniques.
A measure on $\{0,1\}^n$ is said to be Strongly Rayleigh if its generating
polynomial is real stable \cite{bbl09}.
There are many interesting examples of Strongly Rayleigh
distributions \cite{pp14}: The example mentioned earlier of heads of independent
coin flips conditional on the total number of heads in the outcome;
symmetric exclusion processes; 
determinental point processes and determinental measures on a boolean
lattice.
An example of particular interest to us is the edges of uniform or
weighted random spanning trees, which form a Strongly Rayleigh
distribution.

We prove a Matrix Chernoff bound for the case of $k$-homogeneous
Strongly Rayleigh distributions.
Our bound is slightly weaker than the bound for independent variables.
We give lower bounds that show our bounds are close to tight in some
regimes, but importantly, our lower bounds do not establish separation
from the behaviour of indepedent random matrices, leaving open the
question of whether the true bound should match the independent case
in all regimes -- which seems plausible.
We use our bound to show new concentration results related to random
spanning trees of graphs.
An open question is to find other interesting applications of our
concentration result, e.g. by analyzing concentration for matrices
generated by exclusion processes.

Random spanning trees are one among the most well-studied
probabilistic objects in graph theory, going back to the work of
Kirchoff \cite{k47} in 1847, who gave formula relating the number of
spanning trees in a graph to the determinant of the Laplacian of the
same graph.

Algorithms for sampling of random spanning trees have been studied
extensively,  
~\cite{Guenoche83,
  Broder89, Aldous90,Kulkarni90,Wilson96,
  ColbournMN96,KelnerM09,MadryST15,HarveyX16,dkprs17,dppr17,schild18},
and a random spanning tree can now be sampled in almost linear time \cite{schild18}.

\todolow{rephrase below}
In theoretical computer science, random spanning trees have found a
number of applications, most notably in breakthrough results on
approximating the traveling salesperson problem with
symmetric~\cite{gss11} and asymmetric costs~\cite{AsadpourGMGS10}.
Goyal et al.~\cite{grv09} demonstrated that adding just two random
spanning trees sampled from a bounded degree graph gives a $O(\log n)$
cut sparsifier with probability $1-o(1)$.
Later, it was shown by Fung, Hariharan, Harvey, Panigraphi
\cite{fhhp11}, that if we sample $O(\epsilon^{-2} \log^2 n)$ random spanning
trees from a graph, reweight the tree edges by the inverse of their leverage scores
in the original graph, and average
them together, then whp. we get a graph where every the weight of edges
crossing every cut is approximately
the same in as in the original graph, up to
a factor $(1\pm\epsilon)$.
We refer to this as an
$\epsilon$-cut sparsifier.
The techniques of Fung et~al. unfortunately do not extend to proving
spectral sparsifiers.

Spectral graph sparsifiers were introduced by Spielman and
Teng~\cite{ST04}, who for any graph $G$ showed how to construct a
another graph $H$ with $\epsilon^{-2} n\poly\log n$ edges
s.t. $(1-\epsilon) L_G \preceq L_H \preceq (1+\epsilon)L_G $, which we refer
to as an $\epsilon$-spectral sparsifier.
The construction was refined by Spielman and Srivastava \cite{ss11},
who suggested sampling edges independently\footnote{\cite{ss11}
  analyzed sampling with replacement, but based on \cite{t12}, a 
  folklore result shows the same behavior can be obtained by doing
  independent coin flips for every edge with low leverage score, again
with inclusion probabilities proportional to leverage scores.} with probability proportional to their
\emph{leverage scores}, and brought the number of required
samples down to $\epsilon^{-2} n \log n$.
This analysis is tight in the sense that if fewer than $o(\epsilon^{-2} n \log n)$
samples are used, there will be at least a $1/\poly(n)$ probability of
failure.
Meanwhile, $\epsilon^{-2} n \frac{\log n}{\log \log n}$ independent samples
  in a union of cliques can be shown whp. to fail to give a cut sparsifier.
This can be observed directly from the degree distribution of a single
vertex in the complete graph\todolow{}.
For a variant of \cite{ss11} sampling based on flipping a single coin
for each edge to decide whether to keep it or not, 
it can also be shown that when the expected number of edges is
$\epsilon^{-2} n \frac{\log n}{\log \log n}$,
whp. the procedure fails to give a cut sparsifier.
For arbitrary sparsification schemes, bounds in \cite{bss12} show that 
$\Theta(\epsilon^{-2} n)$ edges are necessary and sufficient to give an
$\epsilon$-spectral sparsifier.

The marginal probability of an edge being present in a random spanning
tree is exactly the leverage score of the edge.
This seems to suggest that combining $\epsilon^{-2} \poly\log n$
spanning trees might give a spectral sparsifier, but the lack of
independence between the sampled edges means the process 
cannot be analyzed using existing techniques.
Observing this, Baston, Spielman, Srivastava, Teng \cite{bsst13}
in their excellent 2013 survey on sparsification noted that
{\it ``it remains to be seen if the union of a small number of random spanning trees can produce a spectral sparsifier.''}
We answer this question in the affirmative. In particular, we show
that adding together $O(\epsilon^{-2} \log^2 n)$ spanning trees with edges
scaled proportional to inverse leverage scores in the original graph
leads to an $\epsilon$-spectral sparsifier.
This matches the bound obtained for cut sparsifiers in \cite{fhhp11}.
Our result also implies their earlier bound since a spectral sparsifier is always a
cut sparsifier with the same approximation quality.
Before our result, only a trivial bound on the number of spanning
trees required to build a spectral sparsifier was known.
In particular standard matrix concentration arguments like those in
\cite{ss11} prove that $O(\epsilon^{-2} n \log n)$ spanning trees
suffice.
Lower bounds in \cite{fhhp11} show that whp.  $\Omega(\log n)$  random
spanning trees are required to give a constant factor spectral
sparsifier.

We show that whp. $\epsilon^{-2} \frac{\log n}{\log \log n}$ random
spanning trees do not give an $\epsilon$-spectral sparsifier.
%
%
We also show that the Laplacian of a single random tree with edges
weighted as above satisfies 
$L_T \preceq O(\log n) L_G$
whp., and we give an almost matching lower bound, showing that in some graphs
whp. $L_T \not\prec \frac{1}{8} \frac{\log n}{\log\log n} L_G$.
Before our work, the main result known about approximating graphs using $O(1)$ random spanning
trees is due to Goyal, Rademacher, Vempala \cite{grv09}, who showed
that surprisingly, when the original graph has bounded degree, 
adding two random spanning trees gives a graph
whose cuts approximate the cuts in the original graph 
up to a factor $O(\log n)$ with
good probability. 
\todolow{good -> high?}
As our result for a single tree establishes only a one-sided bound an
interesting open question remains: Does sampling $O(1)$ random
spanning trees give a $\log n$-factor spectral sparsifier with, say,
constant probability?
\subsection{Previous work}
\paragraph{Chernoff-type bound for matrices.}
Chernoff-like bounds for matrices appear in 
Rudelson \cite{r99} and Ahlswede and Winter \cite{aw02}.
The latter introduced a useful matrix-variant of the Laplace transform
that is central in concentration bounds for scalar-valued matrices.
Their bounds restricted to iid random matrices, an artifact of their
use of the Golden-Thompson inequality for bounding traces.
In contrast, Tropp obtained more flexible concentration bounds for random matrices
by using a result of Lieb to bound the expected trace of various
operators \cite{t12}, including bounds for matrix martingales \cite{t11b}.

In a recent work by Garg, Lee, Song and Srivastava \cite{glss18}, they
show a Chernoff bound for sums of matrix-valued random variables
sampled via a random walk on an expander graph. This work confirms a
conjecture due Wigderson and Xiao. 
The proof of Garg et~al. is also concerned with matrices that are not 
fully independent. In this case 
the matrices are generated from random walks on an expander
graph.
The main idea to deal with dependence issue is using a new
multi-matrix extension of the Golden-Thompson inequality and an
adaptation of Healy's proof of the expander Chernoff bound in the scalar's case \cite{h08} to matrix case.
Their techniques deal with fairly generic types of dependence, and
cannot leverage the very strong stability properties that arise from
the negative dependence and stochastic covering properties of Strongly
Rayleigh distributions.
Harvey and Olver~\cite{ho14} proved a matrix concentration result for
randomized pipage rounding, which can be used to show concentration
results for random spanning trees obtained from pipage rounding, but not
for (weighted) uniformly random spaning trees. The central technical
element of their proof is a new variant of a theorem of Lieb on
concavity of certain matrix trace functions.

Matrix martingales have played a central role in a number of
algorithmic results in theoretical computer science \cite{ks16,cmp16,kpps17}, but beyond a reliance on Tropp's 
Matrix Freedman Inequality, these works have little in common with our approach. 
However, our bound does share a technical similarity with \cite{ks16},
namely that a sequence of increasingly restricted random choices in a
martingale process lead to a $\log n$ factor in a variance bound. 

\paragraph{Strongly Rayleigh Distributions in Theoretical Computer Science.}
Perhaps the most prominent result on Strongly Rayleigh distributions
in theoretical computer science is the generalization of
\cite{mms13} to Strongly Rayleigh distributions.

The central technical result of \cite{mms13} essentially shows that
given a collection of independent random vectors $v_1,\ldots, v_m$
with finite support in $\mathbb{C}^n$ s.t. $\sum_{i=1}^m \E
[v_iv_i^*] = I$ and for all $i$, $\norm{v_i}^2 \leq \eps$,
then ${\Pr[\norm{\sum_{i=1}^m v_iv_i^*} \leq
  (1+\sqrt{\eps})^2]} >0$.
\cite{ao15} establishes a related result for $k$-homogeneous Strongly
Rayleigh distributions, though they require an additional constraint
on the marginal probability that any given random variable is
non-zero being bounded above by $\delta$, and then establish 
${\Pr[\norm{\sum_{i=1}^m v_iv_i^*} \leq  4(\epsilon + \delta)
  + 2(\epsilon+\delta)^2]} >0$.
Based on this, \cite{ao15} shows\footnote{Their full statement is more
  general, see \cite{ao15} Corollary 1.9.} that given an unweighted $k$-edge
connected graph $G$ where every
edge has leverage score at most $\epsilon$, there exists an unweighted
spanning tree s.t. $L_T \preceq O(\frac{1}{k} + \epsilon) \cdot
L_G$. This is referred to as a spectrally thin tree with parameter $ O(\frac{1}{k} + \epsilon) $.

\cite{agr16} showed how to algorithmically sample from $k$-homogeneous
Determinental Point Process in time $\poly(k) n
\log(n/\epsilon)$, where $n$ is the dimension of matrix giving rise to
the determinental point process and $\epsilon$ is the allowed total
variation distance.
Their techniques are based on generalization proofs of expansion in
the base graph associated with a balanced matroid, a result first
established by \cite{fm92}.

\paragraph{Random spanning trees.}
Algorithms for sampling random spanning trees have a long history, but
only recently have they explicitly used matrix concentration
\cite{dkprs17,dppr17,schild18}.
The matrix concentration arguments in these papers, however, deal
mostly with how modifying a graph results in changes to the
distribution of random spanning trees in the graph.
We instead study how closely random spanning trees resemble the graph
they were initially sampled from. Whether our result in turn has
applications for improving sampling algorithms for random spanning
trees is unclear. 

The fact that spanning tree edges exhibit negative dependence has been
used strikingly in concentration arguments by Goyal et
al. \cite{grv09} to show that two random spanning trees gives  $O(\log
n)$-factor approximate cut sparsifier in bounded degree graphs, with
good probability.
\todolow{good -> high?}
This is clearly false when sampling the same number of edges
independently, because this graph has large probability of having
isolated vertices.\todolow{confirm}
Goyal et~al. improve over independent sampling by leveraging the fact
that for a fixed tree, in some sense, very few cuts of a given
size exist.
This is a variant of Karger's famous cut-counting techniques \cite{k93,ks96}
specialized to unweighted trees.


Uses of negatively dependent Chernoff bounds applied to tree
edges also appeared in works on approximation algorithms for TSP
problems \cite{gss11, AsadpourGMGS10}, where additionally the
connectivity properties of the tree play an important role
\todolow{does that make sense?}

In contrast, the techniques of Fung et~al. \cite{fhhp11} show that 
$O(\epsilon^{-2} \log^2 n)$ spanning trees suffice to give a 
$(1\pm\epsilon)$-cut sparsifier, but they 
do not show that tree-based
sparsifiers improve over independent sampling,
The focus of their paper is to establish that wide range of different
techniques for choosing sampling probabilities all give cut
sparsifiers, by establishing a more flexible framework than the
original cut-sparsifier results of Benczur-Karger
\cite{bk96}, using related cut-counting techniques (see
\cite{k93,ks96}).
To extend their results to spanning trees, they
simply observe that the (scalar-valued) Chernoff bounds they use
directly apply to negatively dependent variables, and hence edges
in spanning trees.

Fung et~at. \cite{fhhp11} also establish a lower bound, showing that for
any constant $c$, there exists a graph for which obtaining a
factor $c$-cut sparsifier by averaging trees requires using at least
$\Omega(\log n)$ trees to succeed with constant probability.
%





\subsection{Our results and techniques}

\define{thm:srmatchernoff}{Theorem}{{\rm (First main result, a Matrix Chernoff Bound
  $k$-homogeneous Strongly Rayleigh Distributions)}{\bf .} 
Suppose $(\xi_1, \ldots, \xi_m) \in  \{ 0,1\}^m$ is a
random vector of $\{0,1\}$ variables whose distribution is
$k$-homogeneous and Strongly Rayleigh.

Given a collection of PSD matrices $A_1,\ldots A_m \in \mathbb{R}^{n
  \times n}$
s.t. for all $e \in [m]$ we have $\norm{A_e} \leq R$ and 
$\norm{\E[ \sum_{e} \xi_e A_e]} \leq \mu$.

Then for any $ \epsilon > 0$,
\ifdefined\focsversion
\begin{align*}
& ~ \Pr\left[ 
\norm{\sum_{e} \xi_e A_e - \E\left[ \sum_{e} \xi_e
      A_e\right]} \geq \epsilon \mu
\right] \\
\leq & ~ 
n \exp\left( - \frac{\epsilon^2 \mu }{R (\log k + \epsilon) } \Theta(1) \right)  
\end{align*}
\else
\begin{align*}
\Pr\left[ 
\norm{\sum_{e} \xi_e A_e - \E\left[ \sum_{e} \xi_e
      A_e\right]} \geq \epsilon \mu
\right]
\leq 
n \exp\left( - \frac{\epsilon^2 \mu }{R (\log k + \epsilon) } \Theta(1) \right)  
\end{align*}
\fi
}

\state{thm:srmatchernoff}

This Matrix Chernoff bound matches the bounds due to Tropp~\cite{t12}, 
up to the $\log k$ factor in the exponent.
Our lower bounds rule out that a much stronger bound is true
in the $\epsilon \approx \log k$ regime, since the level of
concentration we prove at this $\epsilon$ is stronger than what is ruled
out for $\epsilon \approx \frac{\log k}{\log \log k}$ by the lower
bound in Theorem~\ref{thm:intro_one_spanning_tree_upper}.
However, this does \emph{not} rule out that the $\log k$ factor in our
bound is unnecessary.
I.e. we cannot rule out that a stronger concentration statement might
match the bound for the independent case given by Tropp~\cite{t12}.

\begin{remark} 
When the Strongly Rayleigh distribution is in fact a product
distribution on a collection of $l$ Strongly
Rayleigh distributions that are each $t$-homogeneous, then the joint distribution is $l t$-homogeneous Strongly
Rayleigh, but in Theorem~\ref{thm:srmatchernoff}, the factor
$\log(lt)$ can be replaced by a factor $\log t$.
This applies to the case of independent random spanning trees, but
only gives a constant factor improvement in the number of trees required.
\end{remark}

Our work is related to the concentration inequality of
Peres and Pemantle~\cite{pp14}, who showed a concentration result for
scalar-valued Lipschitz functions of Strongly Rayleigh distributions.
They used Doob martingales (martingales constructed from sequences of
conditional expectations) to prove their result.
We use a similar approach for matrices, constructing Doob matrix
martingales from our Strongly Rayleigh distributions.
In addition, we use the stochastic covering property of Strongly
Rayleigh distributions observed by Peres
and Pemantle, but implicitly derived in \cite{bbl09}.
This property leads to bounded differences in Doob martingale
sequences for scalars. 
As in the scalar setting, it is possible to show 
concentration results for matrix-valued martingales.
We use the Matrix Freedman inequality\footnote{Note, however, that we are able to prove deterministic bounds on the
\emph{predictable quadratic variation process}, which means the bound
we use is more analogous to a matrix version
Bernstein's inequality, adapted to martingales.
We resort to the more complicated Freedman's inequality
only because it gives a directly applicable statement that is known in the literature.
}
 of Tropp.
This inequality allows makes it possible to 
establish strong concentration bounds based on control of sample
norms and control of the \emph{predictable quadratic variation process}
of the martingale, a matrix-valued object that is used to
measure variance (see \cite{t11b}). 
We show that as in the scalar setting, the stochastic covering
property of Strongly Rayleigh distributions leads to bounded
differences for Doob matrix martingales.
But, we also combine the stochastic covering property with 
deceptively simple matrix martingale properties and a negative
dependence condition to derive additional bounds on the predictable quadratic variation
process of the martingale.
The key negative dependence property we use is a simple observation that
generalizes, to $k$-homogeneous Strongly Rayleigh distributions,
 the fact that in a random spanning tree, conditioning on
the presence of a set of edges lowers the marginal probability of every
other graph edge being in the tree (see Lemma~\ref{lem:intro_shrinking_marginals}).
While we frame it differently, it is essentially an immediate consequence
of statements in \cite{bbl09}.
The surprise here is how useful this simple observation is for
removing issues with characterizing conditional $k$-homogeneous
Strongly Rayleigh distributions.
As a corollary we get our second main result.
\define{thm:intro_sum_spanning_trees}{Theorem}{{\rm(Second main result, concentration bound of a batch of independent random spanning trees)}{\bf.}
Given as input a weighted graph $G$ with $n$ vertices and a parameter $\epsilon > 0$, let $T_1, T_2, \cdots, T_t$ denote $t$ independent inverse leverage score weighted random spanning trees,  if we choose $t = O(\epsilon^{-2} \log^2 n)$ then with probability $1-1/\poly(n)$,
\begin{align*}
(1-\epsilon)  L_G \preceq \frac{1}{t} \sum_{i=1}^t L_{T_i} \preceq (1+\epsilon) L_G.
\end{align*}
}
\state{thm:intro_sum_spanning_trees}

Prior to our work, only a trivial bound on the number of spanning
trees required to build a spectral sparsifier was known, namely that standard matrix concentration arguments like those in
\cite{ss11} prove that $O(\epsilon^{-2} n \log n)$ spanning trees suffice.
Note that, the number of spanning trees required to build spectral
sparsifier in our Theorem~\ref{thm:intro_sum_spanning_trees} matches
the number of spanning trees required to construct cut sparsifier in
previous best result \cite{fhhp11}.
The total edge count we require is
$\Theta(\epsilon^{-2} n \log^2 n)$, worse by a factor
$\log n$ than the bound for independent edge sampling obtained in $\cite{ss11}$.
It is not clear whether this factor in necessary.

\begin{remark}
  Suppose we apply our Theorem~\ref{thm:intro_one_spanning_tree_upper}
  to show that any single random spanning tree satisfies  
  $L_T \preceq O(\log n) \cdot L_G$ whp. This is tight up a $\log\log
  n$ factor.
  Then, one can from use this to derive
  Theorem~\ref{thm:intro_sum_spanning_trees} based on a 
  standard (and tight) Matrix Chernoff bound, and a
  (laborious) combination of Doob martingales and stopping time
  arguments similar to those found in \cite{t12,ks16}.
  This line of reasoning will lead to the same bounds as
  Theorem~\ref{thm:intro_one_spanning_tree_upper}.
  Thus, unless one proves more than just a norm bound for each
  individual tree,
  it is not possible improve over our result, except for $\log \log n$ factors.
\end{remark}
Like the work of Fung et~al. our results for spanning trees do not improve over
the independent case.
Fung et~al. achieved their result by combining cut
counting techniques with Chernoff bounds for scalar-valued negatively dependent variables.
In our random matrix setting,
there are no clear candidates for a Chernoff bound for negatively
dependent random matrices that we can adopt, and this type of bound is exactly what
we develop in the Strongly Rayleigh case.

We establish a one-sided concentration result for a single tree, namely that
whp. $L_T \preceq O(\log n) \cdot L_G$.
Again, this is a direct application of Theorem~\ref{thm:srmatchernoff}.
\define{thm:intro_one_spanning_tree_upper}{Theorem}{{\rm(Third main result, upper bound for the concentration of one random spanning tree)}{\bf.}
Given a graph $G$, let $T$ be a random spanning tree, then with probability at least $1-1/\poly(n)$
\begin{align*}
L_T \preceq O(\log n) \cdot L_G.
\end{align*}
}
\state{thm:intro_one_spanning_tree_upper}
This upper bound is tight up to a factor $\log\log n$ as shown by our
almost matching lower bound stated below.
\define{thm:intro_one_spanning_tree_lower_high_probability}{Theorem}{{\rm(Lower bound for the concentration of one random spanning tree)}{\bf.}
  For any $n \geq 2^{2^6}$, 
 there is an unweighted graph $G$ with $n$ nodes, s.t. if we sample an inverse leverage score weighted random spanning tree $T$, then with probability at least $1 - e^{-n^{.4}}$,
\begin{align*}
L_T \not\prec  \frac{ \log n }{ 8 \log \log n} \cdot L_G.
\end{align*}
}
\state{thm:intro_one_spanning_tree_lower_high_probability}

Trivially, the presence of degree one nodes in $L_T$ means
that in a complete graph, $L_G \not\prec L_T $.
So choosing any other scaling of the tree will make at least one of
the inequalities $L_T \not\prec \frac{1}{8} \log n / \log \log n \cdot L_G$
and $L_G \not\prec  L_T $ true with a larger gap.
Note that in the complete unweighted graph the trees we consider have weight
$\Theta(n)$ on each edge. A random spanning tree in the complete graph has diameter about
$\sqrt{n}$ \cite{rs67}.
This can be shown to imply that for an \emph{unweighted}
random tree $\hat{T}$, $L_G \not\prec
\sqrt{n} L_{\hat{T}}.$
But once we scale up every edge of the tree by a factor $\Theta(n)$,
the diameter bound no longer directly implies a spectral gap of the
form
$L_G \not\prec
\alpha L_{T}$ for some $\alpha$.
In a ring graph, we get $L_G \not\prec (n-2) L_T $ and $L_T \not\prec L_G$.

We can also show in general that $L_T \not\prec 10 \log n \cdot L_G$, but with a much smaller probability.
\define{thm:intro_one_spanning_tree_lower_low_probability}{Theorem}{{\rm(Lower bound for the concentration of one random spanning tree)}{\bf.}
For any $n \geq 4$, there is an unweighted graph $G$ with $n$ nodes, s.t. if we sample an inverse leverage score weighted random spanning tree $T$, 
 then with probability at least $2^{- 150\log n \log \log n}$,
\begin{align*}
L_T \not\prec 10 \log n \cdot L_G.
\end{align*}
}
\state{thm:intro_one_spanning_tree_lower_low_probability}
%
%
And we show a lower bound for $\epsilon$-spectral sparsifiers for
random spanning trees.
\define{thm:intro_multiple_spanning_tree_lower_high_probability}{Theorem}{{\rm(Lower bound for the concentration of multiple random spanning trees)}{\bf.}
For any $n \geq 2^{100}$, there is an unweighted graph $G$ with $n$ nodes, s.t. for any accuracy parameter $\epsilon \in ( 5 n^{-0.1} ,1/2)$, if we sample $t = 0.05 \epsilon^{-2} \log n$ independent random spanning trees with edges weighted
by inverse leverage score,
then with probability at least $1 - e^{-n^{.39}}$,
\begin{align*}
(1-\epsilon)  L_G \not\prec \frac{1}{t} \sum_{i=1}^t L_{T_i}
\text{ and }
\frac{1}{t} \sum_{i=1}^t L_{T_i} \not\prec (1+\epsilon) L_G.
\end{align*}
}
\state{thm:intro_multiple_spanning_tree_lower_high_probability}

Our lower bound is incomparable with that of Fung et~al.\cite{fhhp11}, who showed that for
any constant $c$, there exists a graph for obtaining a
factor $c$-cut sparsifier by averaging trees requires using at least
$\Omega(\log n)$ trees to succeed with constant probability.
%
 Where Fung et.~al \cite{fhhp11}  used triangles in their lower bound construction,
 our bad examples are based on
collections of small cliques, which lets us ensure cut differences
in even a single tree, by giving longer-tailed degree
distributions.
All of our lower bounds are based on simple constructions from
collections of edge disjoint cliques, and use the fact that the exact
distribution of degrees of a fixed vertex in a random spanning tree of the complete
graph is known.
Note that a lower bound for cut approximation implies a lower-bound for
spectral approximation, because the contrapositive statement is true: 
spectral approximation implies cut approximation.

\begin{remark}
  In fact, all our lower bounds also directly apply for cut approximation, which is a strictly stronger result.
  For example, there is an unweighted graph $G$, s.t. if we sample an inverse leverage score weighted random spanning tree $T$,
  then with probability at least $1 - e^{-n^{.4}}$,
  $T$ has a cut which is larger than the corresponding cut in $G$ by a
  factor  
$\frac{ \log n }{ 8 \log \log n} $.
\end{remark}

\paragraph{Connection to Spectrally Thin Trees}
Using their MSS-type existence proof for ``small norm outcomes'' of 
homogeneous Strongly Rayleigh distributions, \cite{ao15} showed that in an unweighted $k$-edge
connected graph $G$ where every
edge has leverage score at most $\epsilon$, there exists an unweighted
spanning tree $\hat{T}$ s.t. $L_{\hat{T}} \preceq O(\frac{1}{k} + \epsilon) \cdot
L_G$. This is referred to a spectrally thin tree with parameter $ O(\frac{1}{k} + \epsilon) $.

In contrast, applying our $k$-homogeneous Strongly Rayleigh Matrix Chernoff bound to
an unweighted graph $G$ where every edge has leverage score at most $\epsilon$,
we can show that an unweighted \emph{random} spanning tree satisfies 
$L_T \preceq O( \epsilon \log n )  L_G$ with high probability.
This follows immediately from our Theorem~\ref{thm:intro_one_spanning_tree_upper},
because if we let $T$ denote the unweighted spanning tree and
$\hat{T}$ corresponding spanning tree with edges weighted by inverse
leverage scores, then
\begin{align*}
L_{\hat{T}}  = & ~ \sum_{e \in T}  w(e) b_e b_e^\intercal \\
\preceq & ~ \epsilon \sum_{e \in T} \frac{1}{l(e)} w(e) b_e b_e^\intercal  \\
= & ~ \epsilon L_{\hat{T}} \\
\preceq & ~ O( \epsilon \log n ) L_G
\end{align*}
whp.

The proof in \cite{ao15} is based on an adaptation of the \cite{mms13} 
proof, and does not have clear parallels with our approach.
Whereas the key properties of Strongly Rayleigh distributions that we
use are stochastic covering (a property that limits change in
a distribution under conditioning) and conditional negative
dependence, the central element of their approach is a proof that
certain mixed characteristic polynomials associated with
$k$-homogeneous distributions are real stable when the original
distribution is Strongly Rayleigh.

The following Lemma captures a simple but crucial property of Strongly
Rayleigh distributions.

\begin{lemma}[Shrinking Marginals]\label{lem:intro_shrinking_marginals}
Suppose $ (\xi_1, \ldots, \xi_m) \in  \{ 0,1\}^m$ is a
random vector of $\{0,1\}$ variables whose distribution is
$k$-homogeneous and Strongly Rayleigh,
then any set $S \subseteq [m]$ with $|S| \leq k$
for all $j \in [m]\setminus S$
\[
\Pr[ \xi_j = 1 | \xi_S = \mathbf{1}_S]
\leq
\Pr[ \xi_j = 1 ] .
\]
\end{lemma}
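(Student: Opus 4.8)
The plan is to reduce the statement to a pair of standard facts about Strongly Rayleigh measures: (i) that conditioning a Strongly Rayleigh distribution on the value of one coordinate again yields a Strongly Rayleigh distribution, and (ii) that Strongly Rayleigh distributions are \emph{negatively associated}, and in particular pairwise negatively correlated, so that $\Pr[\xi_j = 1 \mid \xi_i = 1] \le \Pr[\xi_j = 1]$ for any two coordinates $i \ne j$. Both of these are established in \cite{bbl09} (closure under conditioning is one of the basic stability properties there, and negative association is the headline consequence of real stability). Given these, I would induct on $|S|$. For $|S| = 1$, say $S = \{i\}$, the claim is exactly pairwise negative correlation. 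For the inductive step, write $S = S' \cup \{i\}$ with $i \notin S'$ and $|S'| = |S| - 1$, and condition first on $\xi_i = 1$: the conditional law $\mu' := \mathcal{L}(\xi \mid \xi_i = 1)$ is Strongly Rayleigh by (i), and it is $(k-1)$-homogeneous on the remaining $m-1$ coordinates (one unit of the total mass is now pinned at coordinate $i$). Since $|S'| \le k - 1$, the inductive hypothesis applied to $\mu'$ gives $\Pr_{\mu'}[\xi_j = 1 \mid \xi_{S'} = \mathbf{1}_{S'}] \le \Pr_{\mu'}[\xi_j = 1]$. But the left-hand side is precisely $\Pr_\mu[\xi_j = 1 \mid \xi_S = \mathbf{1}_S]$, and the right-hand side is $\Pr_\mu[\xi_j = 1 \mid \xi_i = 1] \le \Pr_\mu[\xi_j = 1]$ by pairwise negative correlation again. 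Chaining these two inequalities closes the induction.

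An alternative, perhaps cleaner, route avoids induction entirely: condition on the whole block $\xi_S = \mathbf{1}_S$ in one step. Repeated application of closure under conditioning (conditioning on coordinates one at a time, each of which lands in $\{0,1\}$, which is a legal "specialization" in the sense of \cite{bbl09}) shows $\mu_S := \mathcal{L}(\xi \mid \xi_S = \mathbf{1}_S)$ is Strongly Rayleigh; homogeneity drops by exactly $|S|$ at each conditioning, and since $|S| \le k$ this stays well-defined (the degenerate case $|S| = k$ forces $\xi_j = 0$ deterministically for $j \notin S$, so the inequality $0 \le \Pr[\xi_j=1]$ is trivial). Then negative association of the \emph{original} measure $\mu$, applied to the increasing event $\{\xi_S = \mathbf{1}_S\}$ and the increasing indicator $\xi_j$, directly yields $\E_\mu[\xi_j \mid \xi_S = \mathbf{1}_S] \le \E_\mu[\xi_j]$, which is the claim. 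One must be slightly careful here: negative association as usually stated controls $\E[f(\xi_S) g(\xi_{S^c})]$ versus $\E[f]\E[g]$ for monotone $f,g$ on disjoint coordinate sets, so I would phrase it as $\E[\mathbf{1}\{\xi_S = \mathbf{1}_S\} \cdot \xi_j] \le \Pr[\xi_S = \mathbf{1}_S]\cdot \E[\xi_j]$ and then divide through by $\Pr[\xi_S = \mathbf{1}_S]$ (handling the zero-probability case separately).

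The main obstacle is not any deep inequality but bookkeeping around degeneracies and the precise form of the closure/negative-association statements invoked. Specifically: (a) verifying that $\Pr[\xi_S = \mathbf{1}_S] > 0$ whenever $|S| \le k$ and that $S$ has nonzero "reachable" marginals — if some coordinate in $S$ has marginal $0$ the conditioning is vacuous and the lemma is trivially true, so I would dispose of that at the outset; (b) making sure the homogeneity parameter is tracked correctly under conditioning, since the induction (or the iterated conditioning) leans on $|S| \le k$ to keep the conditioned distribution a genuine, nonempty $(k - |S|)$-homogeneous Strongly Rayleigh measure; and (c) citing the exact lemmas of \cite{bbl09} (or \cite{pp14}) for closure under conditioning on a single coordinate and for negative association, rather than re-deriving them. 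I expect the whole argument to be a half page once those citations are pinned down.
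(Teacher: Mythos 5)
Your main argument—induction on $|S|$, using pairwise negative correlation (a consequence of negative association) for the base case and the chaining step, together with closure of Strongly Rayleigh distributions under conditioning on a coordinate equal to $1$—is exactly the paper's proof, down to the induction hypothesis that tracks both the marginal inequality and the Strongly Rayleigh property of the conditioned distribution. Your alternative one-shot route via negative association applied to the increasing event $\{\xi_S = \mathbf{1}_S\}$ is also sound, but the inductive route is the one the paper takes, so no further comparison is needed.
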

\ifdefined\focsversion
We provide the proof of this lemma in the full version.
In the remaining sections, we prove
Theorems~\ref{thm:srmatchernoff}, 
\ref{thm:intro_sum_spanning_trees}
and \ref{thm:intro_one_spanning_tree_upper}.
We defer the proofs of our other results to the full version.
\else
We provide a proof in Section~\ref{sec:marginals}.
\fi








\section{Notation}
We use $[n]$ to denote set $\{1,2,\cdots, n\}$.
Given a vector $x$, we use $\norm{x}_0$ to denote the number of
non-zero entries in the vector.

\paragraph{Matrix and norms.} 
For a matrix $A$, we use $A^\top$ to denote the transpose of $A$. We say
matrix $A$ is positive semi-definite (PSD) if $A = A^\top$ and $x^\top
A x \geq 0$ for all $x \in \R^n$. We use $\succeq, \preceq$ to denote
the semidefinite ordering, e.g. $A \succeq 0$ denotes that $A$ is PSD,
and  $A \succeq B$ means $A-B \succeq 0$.
We say
matrix $A$ is positive definite (PD) if $A = A^\top$ and 
$x^\top A x >0$ for all $x \in \R^n-\{0\}$.
$A \succ B$  means $A-B$ is PD.

For matrix $A \in \R^{n \times n}$, we define $\| A \|$ to be the spectral norm of $A$, i.e.,
\begin{align*}
\| A \| = \max_{ \| x \|_2 = 1, x \in \R^n } x^\top A x.
\end{align*}

Let $\tr(A)$ denote the trace of a square matrix $A$. We use $\lambda_{\max}(A)$ to denote the largest eigenvalue of matrix $A$. For symmetric matrix $A \in \R^{n \times n}$, $\lambda_{\max}(A) = \| A \|$ and $\tr(A) \leq n \| A \|$.



\paragraph{The Laplacian matrix-related definitions.}
Let $G=(V,E,w)$ be a connected weighted undirected graph with $n$ vertices and $m$ edges and edge weights $w_e > 0 $. If we orient the edges of $G$ arbitrarily, we can write its Laplacian as $L= B^\top W B$, where $B \in \R^{m \times n}$ is the signed edge-vertex incidence matrix and defined as follows
\begin{align*}
B(e,v) = \begin{cases}
1, & \text{~if~}v\text{~is~}e\text{'s~head};\\
-1, & \text{~if~}v\text{~is~}e\text{'s~tail};\\
0, & \text{otherwise.} 
\end{cases}
\end{align*}
and $W \in \R^{m\times m}$ is the diagonal matrix with $W(e,e) = w_e$.

\section{Preliminaries}

\subsection{Useful facts and tools}
This section, we provide some useful tools.
\ifdefined\focsversion
For completeness, we prove the following statement in the full version.
\else
For completeness, we prove the following statement in Appendix~\ref{sec:appendix}.
\fi
\begin{fact}\label{fac:symmetric_matrix_triangle_inequality}
For any two symmetric matrices
\begin{align*}
(A - B)^2 \preceq 2A^2 + 2B^2.
\end{align*}
\end{fact}

\subsection{Strongly Rayleigh distributions}
This section provides definitions related to Strongly Rayleigh distributions. For more details, we refer the readers to \cite{bbl09,pp14}.

Let $\mu: 2^{[n]} \rightarrow \R_{\geq 0}$ denote a probability distribution over $2^{[n]}$, and $\sum_{S \subseteq [n]} \mu(S) = 1$. 

Let $x_1, x_2, \cdots, x_n$ denote $n$ variables, we use $x$ to denote $(x_1, x_2, \cdots, x_n)$. For each set $S \subseteq [n]$, we define $x_S = \prod_{i \in S} x_i$. We define the {\bf generating polynomial} for $\mu$ as follows
\begin{align*}
f_\mu( x ) = \sum_{S \subseteq [n]} \mu(S) \cdot x_S.
\end{align*}

We say distribution $\mu$ is $k$-homogeneous if the polynomial
$f_{\mu}$ is a homogeneous polynomial of degree $k$. In other words,
for each $S \in \supp(\mu)$, $|S|=k$.

We say a polynomial $p(x_1, x_2, \cdots, x_n)$ is stable, if
$\text{Im}(x_i) > 0, \forall i \in [n]$, then $p(x_1,\cdots, x_n) \neq
0$. We say polynomial $p$ is real stable, it is stable and all of its
coefficients are real. We say $\mu$ is a {\bf Strongly Rayleigh}
distribution if $f_{\mu}$ is a real stable polynomial.

\begin{fact}[Conditioning on subset of coordinates]
\label{fac:srcondclosure}
Consider a random vector $(\xi_1, \ldots, \xi_m) \in  \{ 0,1\}^m$
 whose distribution is $k$-homogeneous Strongly Rayleigh.
Suppose we get a binary vector  $\mathbf{b} =  (b_1, \ldots, b_t) \in  \{
0,1\}^t$ with $\norm{\mathbf{b}}_0 = l \leq k$, and we get a set $S \subset [m]$ with $\abs{S} = t$.
Then conditional on $ \mathbf{\xi}_{S} = \mathbf{b}$, 
the distribution of $ \mathbf{\xi}_{ [m] \setminus S}$
is $(k-l)$-homogeneous Strongly Rayleigh.
\end{fact}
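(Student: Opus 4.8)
The statement to prove is Fact~\ref{fac:srcondclosure}: conditioning a $k$-homogeneous Strongly Rayleigh distribution on a subset of coordinates being equal to a fixed binary vector $\mathbf{b}$ with $\|\mathbf{b}\|_0 = l$ yields a $(k-l)$-homogeneous Strongly Rayleigh distribution on the remaining coordinates.

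The plan is to reduce the claim to two known closure properties of Strongly Rayleigh / real stable polynomials, applied one coordinate at a time. First I would recall the two elementary operations on the generating polynomial that preserve real stability: (i) \emph{specialization/restriction} $x_i \to 0$, which corresponds to conditioning on $\xi_i = 0$ (after renormalization); (ii) \emph{differentiation} $\partial_{x_i}$ followed by $x_i \to 0$, which corresponds to conditioning on $\xi_i = 1$. Both of these are standard facts about real stable polynomials (the first is trivial; the second follows from the Hermite--Biehler / Obreschkoff-type results used in \cite{bbl09}, and is stated there as a closure property of Strongly Rayleigh measures under conditioning on a single coordinate). A third elementary fact I need is that these operations track homogeneity correctly: setting $x_i = 0$ in a degree-$k$ homogeneous polynomial gives a degree-$k$ homogeneous polynomial in the remaining variables (the monomials containing $x_i$ vanish, the rest are unchanged and still have degree $k$), while $\partial_{x_i}$ applied to a degree-$k$ homogeneous polynomial gives a degree-$(k-1)$ homogeneous polynomial, and then specializing $x_i \to 0$ keeps it homogeneous of degree $k-1$ in the remaining variables.

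The key steps, in order: (1) Write $S = \{i_1, \ldots, i_t\}$ and process the coordinates of $S$ one at a time. (2) Inductively maintain the invariant that after conditioning on the first $j$ coordinates of $S$ (with $l_j$ of the corresponding $b$'s equal to $1$), the conditional law of the remaining coordinates is $(k - l_j)$-homogeneous and Strongly Rayleigh — i.e. its generating polynomial (up to the normalizing constant, which is just the conditional-probability normalization and does not affect stability or homogeneity) is real stable and homogeneous of degree $k - l_j$. (3) For the inductive step, if $b_{i_{j+1}} = 0$, set $x_{i_{j+1}} \to 0$ in the current generating polynomial; if $b_{i_{j+1}} = 1$, apply $\partial_{x_{i_{j+1}}}$ and then set $x_{i_{j+1}} \to 0$. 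In either case real stability is preserved by the closure properties from \cite{bbl09}, and the degree of homogeneity drops by exactly $b_{i_{j+1}}$, which matches the claim. (4) After all $t$ steps we have conditioned on $\xi_S = \mathbf{b}$, the homogeneity is $k - l$, and the polynomial is real stable, hence the conditional distribution of $\xi_{[m]\setminus S}$ is $(k-l)$-homogeneous Strongly Rayleigh. One minor bookkeeping point: one must check that each conditioning event has positive probability so that the conditional distribution is well-defined; this follows because $l \le k$ and the polynomial remains nonzero at each stage (a nonzero homogeneous polynomial of nonnegative degree in the surviving variables), or can simply be assumed as part of the hypothesis that $\mathbf{b}$ is an attainable outcome.

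I do not expect a serious obstacle here — the statement is essentially an unpacking of results already in \cite{bbl09,pp14}. The only place requiring a little care is making sure the correspondence between the probabilistic conditioning operations and the polynomial operations $x_i \to 0$ and $\partial_{x_i}|_{x_i = 0}$ is stated correctly, including the normalization constants, and that the two operations \emph{commute appropriately} so that processing the coordinates of $S$ in any fixed order gives the same final conditional distribution (which it must, since it is literally the conditional distribution given $\xi_S = \mathbf{b}$). The homogeneity tracking is purely combinatorial and routine. So the write-up is short: cite the single-coordinate conditioning closure from \cite{bbl09}, observe it preserves the homogeneity-degree shift, and induct on $|S|$.
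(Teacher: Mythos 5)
Your proposal is correct. The paper does not actually prove Fact~\ref{fac:srcondclosure}; it states it as a known closure property and points to \cite{bbl09,pp14}, and your argument is precisely the standard derivation underlying that citation: conditioning on $\xi_i=0$ corresponds to specializing $x_i\to 0$ and conditioning on $\xi_i=1$ to applying $\partial_{x_i}$ (then $x_i\to 0$), both of which preserve real stability, with the homogeneity degree dropping by exactly $b_i$ at each step; iterating over $S$ and normalizing gives the claim. Your hedging on well-definedness (requiring $\xi_S=\mathbf{b}$ to be an attainable outcome, rather than deducing nonvanishing from $l\le k$ alone) is the right call, since the specialized polynomial can vanish identically when the conditioning event has probability zero.
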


This fact tells us that if we condition on the value of some entries
in the vector, the remaining coordinates still have a Strongly
Rayleigh distribution.

\begin{fact}[Stochastic Covering Property]
\label{fac:scp}
Consider a random vector $(\xi_1, \ldots, \xi_m) \in  \{ 0,1\}^m$
 whose distribution is $k$-homogeneous Strongly Rayleigh.
Suppose we are given an index $i \in [m]$.
Let $\xi' = \xi_{[m] \backslash \{i\}}$ be the distribution on entries
of $\xi$ except $i$.
Let $\xi''$ be the distribution of $\xi_{[m] \backslash \{i\}}$
\emph{conditional} on $\xi_{i} = 1$.
Then, there exists a coupling between $\xi'$ and $\xi''$ (i.e. a joint
distribution the two vectors), s.t. in every outcome of the coupling 
the value of $\xi'$ can be obtained from the value of $\xi''$ by
either changing a single from $0$ to $1$ or by leaving all entries unchanged.
\end{fact}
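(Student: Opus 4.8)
The plan is to derive the stochastic covering property from two standard ingredients: negative association of Strongly Rayleigh measures (a consequence of real stability established in \cite{bbl09}), and Strassen's classical theorem on stochastically dominated measures over a finite lattice. Write $p = \Pr[\xi_i = 1]$; we may assume $0 < p < 1$, since if $p \in \{0,1\}$ one of $\xi', \xi''$ is degenerate and the identity map already works. Let $\mu_1$ be the law of $\xi_{[m]\setminus\{i\}}$ conditioned on $\xi_i = 1$ and $\mu_0$ its law conditioned on $\xi_i = 0$, both viewed as distributions on $\{0,1\}^{[m]\setminus\{i\}}$. By Fact~\ref{fac:srcondclosure}, $\mu_1$ is $(k-1)$-homogeneous and $\mu_0$ is $k$-homogeneous. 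Since $\xi'$ has law $p\,\mu_1 + (1-p)\,\mu_0$ while $\xi''$ has law $\mu_1$, it suffices to build a coupling of a draw $\Xi_0 \sim \mu_0$ and a draw $\Xi_1 \sim \mu_1$ with $\Xi_0 \geq \Xi_1$ coordinatewise: then, using the identity coupling of $\mu_1$ with itself with probability $p$ and the coupling $(\Xi_0,\Xi_1)$ (with $\Xi_0$ playing the role of $\xi'$) with probability $1-p$, one obtains a valid coupling of $\xi'$ and $\xi''$. Crucially, because $\mu_1$ is supported on sets of size exactly $k-1$ and $\mu_0$ on sets of size exactly $k$, any monotone coupling automatically has $\|\Xi_0\|_0 - \|\Xi_1\|_0 = 1$, i.e. $\Xi_0$ is obtained from $\Xi_1$ by flipping exactly one coordinate from $0$ to $1$. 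So the ``single flip'' requirement is free once a monotone coupling exists.

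Thus the task reduces to showing $\mu_1 \preceq_{\mathrm{st}} \mu_0$, i.e. $\Pr[\xi_{[m]\setminus\{i\}} \in U \mid \xi_i = 1] \leq \Pr[\xi_{[m]\setminus\{i\}} \in U \mid \xi_i = 0]$ for every up-set $U \subseteq \{0,1\}^{[m]\setminus\{i\}}$. This follows from negative association applied to the complementary coordinate blocks $\{i\}$ and $[m]\setminus\{i\}$ with the increasing functions $\xi_i$ and $\mathbf{1}_U$: negative association of the Strongly Rayleigh distribution \cite{bbl09} gives $\E[\xi_i \mathbf{1}_U] \leq \E[\xi_i]\,\E[\mathbf{1}_U]$, that is $\Pr[\xi_i = 1, U] \leq p\,\Pr[U]$. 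Writing $a = \Pr[U \mid \xi_i = 1]$ and $b = \Pr[U \mid \xi_i = 0]$, so that $\Pr[\xi_i = 1, U] = pa$ and $\Pr[U] = pa + (1-p)b$, this rearranges (using $0 < p < 1$) to $a \leq b$, which is precisely the claimed domination.

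Finally, Strassen's theorem on a finite poset yields, from $\mu_1 \preceq_{\mathrm{st}} \mu_0$, a coupling $(\Xi_1,\Xi_0)$ with $\Xi_0 \geq \Xi_1$ almost surely; feeding this into the reduction of the first paragraph completes the proof. The substantive content is entirely imported: the one nontrivial input is negative association of Strongly Rayleigh measures from \cite{bbl09}, and the only points requiring care are invoking it in exactly the two-increasing-functions-on-complementary-blocks form we need and disposing of the degenerate cases $p \in \{0,1\}$. I expect no genuine obstacle and no nontrivial computation; the one mildly subtle observation is that matching the exact homogeneity degrees $k-1$ and $k$ upgrades plain stochastic domination to the single-coordinate covering statement for free.
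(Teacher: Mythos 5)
Your proof is correct. Note that the paper does not actually prove this statement: it is recorded as a Fact and attributed to the literature (\cite{pp14}, with the property implicit in \cite{bbl09}), so there is no internal proof to compare against. Your route --- decompose the marginal law of $\xi_{[m]\setminus\{i\}}$ as the mixture $p\,\mu_1+(1-p)\,\mu_0$, use negative association of Strongly Rayleigh measures from \cite{bbl09} with the two increasing functions $\xi_i$ and $\mathbf{1}_U$ on complementary blocks to get $\mu_1\preceq_{\mathrm{st}}\mu_0$, invoke Strassen to obtain a monotone coupling, and observe that the exact homogeneity degrees $k-1$ and $k$ force the monotone coupling to differ in exactly one coordinate --- is precisely the standard argument by which this property is established in the cited sources, and all the steps (the mixture bookkeeping, the rearrangement giving $\Pr[U\mid\xi_i=1]\le\Pr[U\mid\xi_i=0]$, and the marginal check for the final pasted coupling) are sound. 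The only cosmetic caveat is the degenerate case $\Pr[\xi_i=1]=0$, where $\xi''$ is not defined and the statement is vacuous rather than handled by the identity map; this does not affect the substance.
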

This fact is known as the \emph{Stochastic Covering Property} (see
\cite{pp14}). It gives us a convenient tool for relating the
conditional distribution of a subset of the coordinates of the vector
to the unconditional distribution.

Note that by Fact \ref{fac:srcondclosure}, the distribution of $\xi''$
used in Fact~\ref{fac:scp} is $k-1$ homogeneous.
In contrast, the outcomes of $\xi'$ may have $k$ or $k-1$ ones.
Fact~\ref{fac:scp} tells us that we can pair up all the outcomes of
the conditional distribution $\xi''$ with outcome of the unconditional
distribution $\xi'$ s.t. only a small change is required to make them equal.
This tells us that the distribution is in some sense not changing too
quickly under conditioning.

\subsection{Random spanning trees}
\Rasmus{Need better explanation and reword English a bit.}
We provide the formal definition of random spanning tree in this section.

We use the same definitions about spanning trees as \cite{dkprs17}. Let ${\cal T}_G$ denote the set of all spanning subtrees of $G$. We now define a probability distribution on these trees.
\begin{definition}[$w$-uniform distribution on trees]
\label{def:wtUnifTreeDistr}
Let ${\cal D}_G$ be a probability distribution on ${\cal T}_G$ such that
\begin{align*}
\Pr_{X \sim {\cal D}_G }[ X = T ] \propto \prod_{e \in T} w_e.
\end{align*}
\end{definition}
We refer to ${\cal D}_G$ as the $w$-uniform distribution on ${\cal
  T}_G$. When the graph $G$ is unweighted, this corresponds to the
uniform distribution on ${\cal T}_G$.
Crucially, random spanning tree distributions are Strongly Rayleigh,
as shown in \cite{bbl09}.
\begin{fact}[Spanning Trees are Strongly Rayleigh]
\label{fac:spantreemargin}
In a connected weighted graph $G$, the $w$-uniform distribution on
spanning trees is $(n-1)$-homogeneous Strongly Rayleigh.
\end{fact}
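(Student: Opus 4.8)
The plan is to identify the generating polynomial of the $w$-uniform distribution with a scalar multiple of the determinant of a linear matrix pencil built from positive semidefinite rank-one matrices, and then invoke the standard fact that such determinants are real stable. First, by Definition~\ref{def:wtUnifTreeDistr} the generating polynomial is $f_{{\cal D}_G}(x) = \sum_{T \in {\cal T}_G}\Pr_{X\sim{\cal D}_G}[X=T]\,x_T = \frac{1}{Z}\sum_{T\in{\cal T}_G}\prod_{e\in T}(w_e x_e)$, where $Z = \sum_{T}\prod_{e\in T} w_e > 0$ since $G$ is connected. Every spanning tree of an $n$-vertex graph has exactly $n-1$ edges, so each monomial has degree $n-1$; hence $f_{{\cal D}_G}$ is homogeneous of degree $n-1$, which gives the first half of the claim. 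Scaling by the positive constant $1/Z$ does not affect stability, so for the rest I would work with $\sum_{T}\prod_{e\in T}(w_e x_e)$.

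Second, I would invoke the weighted Matrix--Tree theorem. Fix an arbitrary vertex, say vertex $n$, let $B_0 \in \R^{m\times(n-1)}$ be the signed incidence matrix $B$ with the column of vertex $n$ deleted, and write $c_e^\top$ for the row of $B_0$ indexed by edge $e$. For indeterminates $x$, set $L_0(x) := \sum_{e} w_e x_e\, c_e c_e^\top = B_0^\top \diag(w_e x_e) B_0$. Applying Cauchy--Binet to $L_0(x) = (\diag(w_e x_e)^{1/2} B_0)^\top (\diag(w_e x_e)^{1/2} B_0)$ gives $\det L_0(x) = \sum_{F}\big(\det (B_0)_F\big)^2 \prod_{e\in F}(w_e x_e)$, where the sum ranges over $(n-1)$-subsets $F$ of edges, $(B_0)_F$ is the corresponding square submatrix, and $\det (B_0)_F = \pm1$ exactly when $F$ is the edge set of a spanning tree of $G$ and $0$ otherwise (total unimodularity of the incidence matrix). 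Hence $\det L_0(x) = \sum_{T\in{\cal T}_G}\prod_{e\in T}(w_e x_e) = Z\, f_{{\cal D}_G}(x)$.

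Third, it remains to prove that $\det L_0(x)$ is real stable, which is the only nonroutine step; it follows from the general fact that for positive semidefinite Hermitian matrices $A_1,\ldots,A_m$ the polynomial $\det\big(\sum_e x_e A_e\big)$ is real stable or identically zero, applied with $A_e = w_e c_e c_e^\top \succeq 0$. I would include the short argument: suppose $\mathrm{Im}(x_e) > 0$ for all $e$ and write $\sum_e x_e A_e = P + \i Q$ with $P = \sum_e \mathrm{Re}(x_e) A_e$ Hermitian and $Q = \sum_e \mathrm{Im}(x_e) A_e$. Since $\sum_e A_e = B_0^\top W B_0$ is positive \emph{definite} for connected $G$ (a vector killed by every $c_e$ extends by a zero in coordinate $n$ to a vector constant on $G$, hence $0$) and every $\mathrm{Im}(x_e) > 0$, the matrix $Q$ is positive definite. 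Then for any nonzero $v$, $v^*(P+\i Q)v = v^*Pv + \i\,v^*Qv$ has strictly positive imaginary part $v^*Qv > 0$, so $(P+\i Q)v \neq 0$; thus $P+\i Q$ is invertible and $\det L_0(x) \neq 0$. Since all coefficients of $\det L_0$ are real, it is real stable, and this property is inherited by $f_{{\cal D}_G} = Z^{-1}\det L_0$. Combined with homogeneity of degree $n-1$ from the first step, the $w$-uniform distribution is $(n-1)$-homogeneous Strongly Rayleigh.

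I expect the main obstacle to be purely organizational rather than conceptual: carefully setting up the reduced incidence matrix $B_0$, getting the Cauchy--Binet expansion and the $\pm1$ evaluation of the tree minors right, and making sure the step $Q \succ 0$ genuinely uses connectedness of $G$. Once the determinantal identity $Z f_{{\cal D}_G} = \det L_0$ is in place, the stability argument itself is a two-line computation.
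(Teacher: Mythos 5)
Your proof is correct. Note that the paper does not prove this fact at all: it is imported verbatim from Borcea--Br{\"a}nd{\'e}n--Liggett, so there is no in-paper argument to compare against. Your route is the standard one behind that citation: the generating polynomial is, up to the positive normalization $1/Z$, the spanning-tree polynomial $\sum_{T}\prod_{e\in T}(w_e x_e)$, which by the weighted Matrix--Tree theorem equals $\det\bigl(B_0^\top \diag(w_e x_e) B_0\bigr)$, and determinants of pencils $\sum_e x_e A_e$ with $A_e \succeq 0$ are real stable (or identically zero, which is excluded here since a connected graph has a spanning tree); homogeneity of degree $n-1$ is immediate since every spanning tree has $n-1$ edges. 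Your stability argument is sound, and you correctly isolate where connectedness enters (positive definiteness of $\sum_e w_e c_e c_e^\top$, hence of $Q$). One cosmetic remark: the factorization through $\diag(w_e x_e)^{1/2}$ involves square roots of indeterminates; it is cleaner to apply Cauchy--Binet directly to the product $B_0^\top \cdot \bigl(\diag(w_e x_e) B_0\bigr)$, which yields the same expansion $\sum_F \det\bigl((B_0)_F\bigr)^2 \prod_{e\in F} w_e x_e$ without leaving the polynomial ring (or, equivalently, verify the identity for positive real $x$ and conclude it as a polynomial identity, as you implicitly suggest).
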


\begin{definition}[Effective Resistance]
The effective resistance of a pair of vertices $u,v \in V_G$ is defined as 
\begin{align*}
R_{\mathrm{eff}} (u,v) = b^\top_{u,v} L^\dagger b_{u,v},
\end{align*}
where $b_{u,v}$ is an all zero vector corresponding to $V_G$, except for entries of $1$ at $u$ and $-1$ at $v$.
\end{definition}

The a reference for following standard fact about random spanning
trees can be found in \cite{dkprs17}. 

\begin{definition}[Leverage Score]
The statistical leverage score, which we will abbreviate to leverage score, of an edge $e = (u,v) \in E_G$ is defined as 
\begin{align*}
l_e = w_e R_{\mathrm{eff}}(u,v).
\end{align*}
\end{definition}

\begin{fact}[Spanning Tree Marginals]
\label{fac:spantreemargin}
The probability $\Pr[e]$ that an edge $e\in E_G$ appears in a tree sampled $w$-uniformly randomly from ${\cal T}_G$ is given by
\begin{align*}
\Pr[e] = l_e,
\end{align*}
where $l_e$ is the leverage score of the edge $e$.
\end{fact}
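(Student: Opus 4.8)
The plan is to derive this from the weighted matrix--tree theorem (Kirchhoff's theorem for edge-weighted graphs) together with a contraction identity and Cramer's rule. Write $Z := \sum_{T \in {\cal T}_G} \prod_{f \in T} w_f$ for the normalizing constant, so that by Definition~\ref{def:wtUnifTreeDistr} we have $\Pr[e] = Z^{-1} \sum_{T \ni e} \prod_{f \in T} w_f$. For a vertex $x$ let $L_{\hat x}$ denote the principal submatrix of $L_G$ obtained by deleting the row and column indexed by $x$, and for a pair $x,y$ let $L_{\widehat{\{x,y\}}}$ denote the one with rows and columns $x$ and $y$ deleted. The weighted matrix--tree theorem gives $Z = \det(L_{\hat x})$ for every vertex $x$; since $G$ is connected, each $L_{\hat x}$ is positive definite and these determinants are positive.

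The first step is to evaluate the numerator by contracting $e = (u,v)$. Spanning trees of $G$ containing $e$ are in weight-respecting bijection with spanning trees of the multigraph $G/e$ obtained by identifying $u$ and $v$ and deleting the resulting self-loop (keeping parallel edges distinct): under $T \mapsto T/e$ one has $\prod_{f \in T} w_f = w_e \prod_{f \in T/e} w_f$. Hence $\sum_{T \ni e} \prod_{f \in T} w_f = w_e \cdot Z_{G/e}$, where $Z_{G/e}$ is the weighted spanning-tree count of $G/e$. Applying the matrix--tree theorem to $G/e$ and deleting the merged vertex $\{u,v\}$ gives $Z_{G/e} = \det\big((L_{G/e})_{\widehat{\{u,v\}}}\big)$. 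The elementary observation that makes this usable is that contracting $e$ alters no edge incident to a vertex outside $\{u,v\}$, so for $i,j \in V_G \setminus \{u,v\}$ the $(i,j)$ entries of $L_{G/e}$ and of $L_G$ agree; therefore $(L_{G/e})_{\widehat{\{u,v\}}} = L_{\widehat{\{u,v\}}}$. Combining,
\begin{align*}
\Pr[e] = w_e \cdot \frac{\det\big(L_{\widehat{\{u,v\}}}\big)}{\det\big(L_{\hat u}\big)} .
\end{align*}

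It remains to identify this determinant ratio with $R_{\mathrm{eff}}(u,v)$. Set $M := L_{\hat u}$, a positive definite matrix indexed by $V_G \setminus \{u\}$. By the cofactor formula for the inverse, $(M^{-1})_{vv} = \det(M_{\hat v})/\det(M)$, and $M_{\hat v} = L_{\widehat{\{u,v\}}}$, so the ratio above equals $(M^{-1})_{vv}$. Finally I would check $(M^{-1})_{vv} = b_{u,v}^\top L_G^\dagger b_{u,v}$: writing $\phi := L_G^\dagger b_{u,v}$ for the electrical potentials ($L_G \phi = b_{u,v}$, $\phi \perp \mathbf{1}$) and grounding at $u$, the equations indexed by $V_G \setminus \{u\}$ read $M\, (\phi|_{V_G \setminus \{u\}}) = b_{u,v}|_{V_G \setminus \{u\}}$, and the right-hand side is $\pm$ the standard basis vector at $v$; hence $b_{u,v}^\top L_G^\dagger b_{u,v} = \phi^\top L_G \phi = (M^{-1})_{vv}$. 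Putting everything together, $\Pr[e] = w_e R_{\mathrm{eff}}(u,v) = l_e$, as claimed.

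The individual computations are all standard; the only place needing genuine care is the middle step — verifying that the contraction bijection is weight-preserving and correctly handles parallel edges and the discarded self-loop, and that the doubly-deleted submatrices of $L_{G/e}$ and $L_G$ literally coincide. A more compact alternative is to invoke the all-minors matrix--tree theorem to express $\sum_{T \ni e} \prod_f w_f$ directly as a $2 \times 2$ minor, but routing through contraction keeps the electrical-network picture most transparent. The weighted matrix--tree theorem itself I would cite rather than reprove.
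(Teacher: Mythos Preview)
Your argument is correct and is one of the standard derivations of this identity, combining the weighted matrix--tree theorem with contraction and Cramer's rule to identify $\det(L_{\widehat{\{u,v\}}})/\det(L_{\hat u})$ with $(L_{\hat u}^{-1})_{vv} = R_{\mathrm{eff}}(u,v)$. The sign-tracking in the last step works out once one grounds the potential at $u$, as you indicate; and the claim that the doubly-deleted Laplacians of $G$ and $G/e$ coincide is indeed just the observation that contraction leaves untouched both the off-diagonal entries and the weighted degrees of vertices in $V_G\setminus\{u,v\}$.

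By way of comparison: the paper does not actually prove this statement. It is recorded as a standard fact with a pointer to \cite{dkprs17} (see the sentence just before the definition of leverage score). So you have supplied strictly more than the paper does here. If you want to shorten what you have, the all-minors matrix--tree identity you mention at the end gives $\sum_{T\ni e}\prod_f w_f$ directly and collapses the contraction step; alternatively, one can write $\Pr[e\in T] = w_e\,b_e^\top L_G^\dagger b_e$ immediately from the transfer-current / Burton--Pemantle viewpoint. But your route is entirely fine and self-contained modulo the matrix--tree citation.
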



\section{A Matrix Chernoff Bound for Strongly Rayleigh Distributions}

We first define a mapping which maps an element into a psd matrix.
\begin{definition}[$Y$-operator]
We use $\Gamma$ to denote $[m]$, 
we define a mapping $Y : \Gamma \rightarrow \R^{n \times n}$ such that $Y_{e}$ is a psd matrix and $\| Y_e \| \leq R$.
\end{definition}


Throughout this section, we will use $\xi \in \{0,1\}^m$ to denote a
random length $m$ boolean vector whose distribution is $k$-homogeneous
Strongly Rayleigh.
For any set $S \subseteq [m]$, we use $\xi_S$ to denote the length
$|S|$ vector that only chooses the entry from indices in $S$.

We will frequently need to work with a different representation of the
random variable $\xi$.
We use $\gamma$ to denote this second representation.
The random variable $\gamma$ is composed of a sequence of $k$ random
indices $\gamma_1, \gamma_2,
\cdots, \gamma_k$, each of which takes a value $e_1, e_2, \cdots, e_k
\in [m]$.
The indices give the locations of the ones in $\xi$, i.e. in an
outcome of the two variables $(\xi,\gamma)$, we always have 
$\xi_{\{ \gamma_1, \gamma_2,
\cdots, \gamma_k \}} =  {\bf 1} $ and 
$\xi_{[m] \setminus \{ \gamma_1, \gamma_2,
\cdots, \gamma_k \}} =  {\bf 0} $.
Additionally, we want to ensure that the distribution of $\gamma$ is
invariant under permutation: This can clearly be achieved by 
starting with any distribution for $\gamma$ that satisfies the coupling
with $\xi$ and the applying a uniformly random permutation to reorder
the $k$ indices of $\gamma$ (see \cite{pp14} for a further discussion).

For convenience, for each $i\in [k]$, we define $\gamma_{\leq i}$ and
$\gamma_{\geq i}$ as abbreviated notation for 
\begin{align*}
\gamma_{1}, \gamma_{2}, \cdots, \gamma_{i}  \text{~and~} \gamma_{i}, \gamma_{i+1}, \cdots, \gamma_{k}
\end{align*}
respectively.
Let $S = \{ e_1, e_2, \cdots, e_i\}  \subset [m]$ be one possible assignment for
indices of a subset of the ones in $\xi$, (we require $i \leq k$).
Then the distribution of $\xi_{[m]\setminus S}$ conditional on
$\xi_{S} = {\bf 1}$ is the same as the the distribution of
$\xi_{[m]\setminus S}$ conditional on
$(\gamma_1, \gamma_2, \cdots, \gamma_i) =  (e_1, e_2, \cdots, e_i) $.
In other words, in terms of the resulting distribution of
$\xi_{[m]\setminus S}$, it is equivalent to condition on either $\gamma_{\leq
  i}$ or $\xi_{ \gamma_{ \leq i } } = {\bf 1}$.
We define matrix $Z \in \R^{n \times n}$ as follows.
\begin{definition}[$Z$]
Let $Z$ denote $\sum_{e \in \Gamma} \xi_e \cdot Y_e$ where $\| \xi \|_0 = k$. Due to the relationship between $\xi$ and $\gamma$, we can also write $Z$ as 
\begin{align*}
Z = \sum_{i=1}^k Y_{\gamma_i}.
\end{align*}
For simplicity, for each $i\in [k]$, we define $Z_{\leq i}$ and $Z_{\geq i}$ as follows,
\begin{align*}
Z_{\leq i} = \sum_{j=1}^{i} Y_{\gamma_j} \mathrm{~and~} Z_{\geq i} = \sum_{j=i}^{k} Y_{\gamma_j}.
\end{align*}
\end{definition}

We define a series of matrices $M_i \in \R^{n \times n}$ as follows
\begin{definition}[$M_i$, martingale]\label{def:M}
We define $M_0 = \E[ Z ]$. For each $i \in \{1,2,\cdots,k-1\}$, we define $M_i$ as follows
\begin{align*}
M_i = \E_{\gamma_{\geq i+1}} [ Z ~|~ \gamma_1, \cdots, \gamma_i].
\end{align*}
\end{definition}
It is easy to see that 
\begin{align*}
\E_{\gamma_{i+1}} [ M_{i+1} ] = M_i,
\end{align*}
which implies
\begin{align}
\label{eq:mdiffzeromean}
\E_{\gamma_{i+1}} [ M_{i+1} - M_i ~|~ \gamma_1, \cdots, \gamma_i ] = 0.
\end{align}
Note that we can split $Z$ up as
\begin{align}\label{eq:Z_in_M_i_plus_1}
Z = & ~ \sum_{j=i+2}^{ k } Y_{ {\gamma}_{j}} + Y_{ {\gamma}_{i+1}} + \sum_{j=1}^{i} Y_{\gamma_j} \notag \\
= & ~  {Z}_{\geq i+2} + Y_{ {\gamma}_{i+1}} + Z_{\leq i}
\end{align}
And similarly $Z={Z}_{\geq i+1} + Z_{\leq i} $.

In order to relate $M_{i}$ and $M_{i+1}$, we will consider a fresh
copy of  $\gamma_{\geq i+1}$ which we denote by $\wh{\gamma}_{\geq i+1}$.
We denote the corresponding fresh copy of $Z_{\geq i +1}$, by
$\wh{Z}_{\geq i+1}$.
We can now give an equivalent definition of  $M_{i}$ in terms of the expectation over
$\wh{\gamma}_{\geq i+1}$,
while  $M_{i+1}$ is still defined in terms of the expectation over
${\gamma}_{\geq i+2}$, so that 
\begin{align}
\label{eq:mdecomps}
M_i = \E_{\wh{\gamma}_{\geq i+1}} [ \wh{Z}_{\geq i+1} + Z_{\leq i} ~|~ \gamma_1, \cdots, \gamma_i]
\text{ and } 
M_{i+1} = \E_{{\gamma}_{\geq i+2}} [ 
 {Z}_{\geq i+2} + Y_{ {\gamma}_{i+1}} + Z_{\leq i}
 ~|~ \gamma_1, \cdots, \gamma_i,  \gamma_{i+1} ]
\end{align}
Note that both still depend on the same $\gamma_{\leq i}$ vector, and
$M_{i+1}$ depends on $\gamma_{i+1}$, but $M_{i}$ does not.
So far, we have simply introduced a sligtly different notation for
$M_i$, since the expectation operation ensures that the value of $M_i$
is unchanged.

We let $\xi \in \{0,1\}^m$ denote the binary vector indicating the
positions of indices $\gamma_1, \cdots, \gamma_i, {\gamma}_{i+1}, \cdots,$ ${\gamma}_k$.
while letting $\wh{\xi} \in \{0,1\}^m$ indicate the positions of
indices of $\gamma_1, \cdots, \gamma_i, \wh{\gamma}_{i+1}, \cdots, \wh{\gamma}_k$.




Note that $k$-homogeneous Strongly Rayleigh implies the stochastic
covering property. 
By Fact~\ref{fac:scp},
 the stochastic covering property
implies that a coupling exists s.t.\ adding either \emph{one} or
\emph{no} extra ``ones'' to the vector 
$\xi_{[m]\setminus \gamma_{i+1}}$  results in the vector
$\wh{\xi}_{[m]\setminus \gamma_{i+1}}$.
But, since $\xi_{[m]\setminus \gamma_{i+1}}$ is $k-1$ homogenous and
$\wh{\xi}_{[m]}$ is $k$-homogenous, we can conclude that 
$\wh{\xi}_{[m]\setminus \gamma_{i+1}}$ is obtained from
$\xi_{[m]\setminus \gamma_{i+1}}$ by adding \emph{no} ones,
if and only $\wh{\xi}_{[m]}$ has a one at index $\gamma_{i+1}$.
From this we conclude a more helpful form of stochastic covering: 
we can construct an index $\widetilde{\gamma}_{i+1}$ and a
coupling s.t. conditional on $\gamma_{i+1}$,
the indices
 $\widetilde{\gamma}_{i+1}, \gamma_{i+2}, \cdots, {\gamma}_k$
have the same distribution as $\wh{\gamma}_{i+1}, \wh{\gamma}_{i+2},
\cdots, \wh{\gamma}_k$.
Thus
\begin{align}\label{eq:the_equation_needs_coupling_table_exists}
  {Z}_{ \geq i+2 }  + Y_{\widetilde{\gamma}_{i+1}} =  \wh{Z}_{\geq i+1}.
\end{align}


We define $X_{i+1} = M_{i+1} - M_i$ and then by Eq.~\eqref{eq:mdiffzeromean}
\begin{align}\label{eq:X_i_is_zero_mean}
\E_{\gamma_{i+1}}[X_{i+1} ~|~ \gamma_1, \cdots, \gamma_i ] = 0 .
\end{align}
Then we can rewrite $X_{i+1}$ in the following way,
\begin{claim}\label{cla:rewrite_X_i_plus_1}
Let ${\cal D}_{\gamma_{\leq i+1}}$ denote the coupling distribution between ${Z}_{\geq i+2}$ and $Y_{ \wt{\gamma}_{i+1} }$ such that Eq.~\eqref{eq:the_equation_needs_coupling_table_exists} holds. Then 
\begin{align*}
X_{i+1} = Y_{{\gamma}_{i+1}} - \E_{ (  {Z}_{\geq i+2} , Y_{ \widetilde{\gamma}_{i+1} } ) \sim {\cal D}_{ \gamma_{\leq i+1}} } [ Y_{\widetilde{\gamma}_{i+1}} ~|~ \gamma_{\leq i+1 } ].
\end{align*} 
\end{claim}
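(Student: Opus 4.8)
The plan is to expand $X_{i+1}=M_{i+1}-M_i$ using the two parallel representations of $M_i$ and $M_{i+1}$ recorded in Eq.~\eqref{eq:mdecomps}, and then identify each surviving conditional expectation with one taken over the coupling ${\cal D}_{\gamma_{\leq i+1}}$. First I would pull out the terms that are measurable with respect to the conditioning: in $M_{i+1}=\E_{\gamma_{\geq i+2}}[Z_{\geq i+2}+Y_{\gamma_{i+1}}+Z_{\leq i}\mid\gamma_{\leq i+1}]$, the summand $Z_{\leq i}$ depends only on $\gamma_{\leq i}$ and $Y_{\gamma_{i+1}}$ only on $\gamma_{\leq i+1}$, so both come out of the expectation; similarly $Z_{\leq i}$ comes out of $M_i=\E_{\wh\gamma_{\geq i+1}}[\wh Z_{\geq i+1}+Z_{\leq i}\mid\gamma_{\leq i}]$. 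Subtracting, the $Z_{\leq i}$ terms cancel and
\[
X_{i+1}=Y_{\gamma_{i+1}}+\E_{\gamma_{\geq i+2}}[Z_{\geq i+2}\mid\gamma_{\leq i+1}]-\E_{\wh\gamma_{\geq i+1}}[\wh Z_{\geq i+1}\mid\gamma_{\leq i}].
\]

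The second step is to rephrase the two remaining expectations in terms of ${\cal D}_{\gamma_{\leq i+1}}$. By construction of the coupling, the $Z_{\geq i+2}$-marginal of ${\cal D}_{\gamma_{\leq i+1}}$ is precisely the law of $Z_{\geq i+2}$ conditional on $\gamma_{\leq i+1}$, so $\E_{\gamma_{\geq i+2}}[Z_{\geq i+2}\mid\gamma_{\leq i+1}]=\E_{{\cal D}_{\gamma_{\leq i+1}}}[Z_{\geq i+2}\mid\gamma_{\leq i+1}]$. For the last term I would invoke Eq.~\eqref{eq:the_equation_needs_coupling_table_exists}, namely $\wh Z_{\geq i+1}=Z_{\geq i+2}+Y_{\widetilde\gamma_{i+1}}$ under ${\cal D}_{\gamma_{\leq i+1}}$, together with the property established just before the claim that, conditional on $\gamma_{i+1}$, the indices $\widetilde\gamma_{i+1},\gamma_{i+2},\dots,\gamma_k$ have the (fixed, $\gamma_{i+1}$-independent) law of $\wh\gamma_{i+1},\dots,\wh\gamma_k$; this is what guarantees that the conditional expectation of $\wh Z_{\geq i+1}$ given $\gamma_{\leq i+1}$ under the coupling is the same object as $\E_{\wh\gamma_{\geq i+1}}[\wh Z_{\geq i+1}\mid\gamma_{\leq i}]$. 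Substituting these two identities into the display above and cancelling the common term $\E_{{\cal D}_{\gamma_{\leq i+1}}}[Z_{\geq i+2}\mid\gamma_{\leq i+1}]$ by linearity of expectation leaves exactly $X_{i+1}=Y_{\gamma_{i+1}}-\E_{{\cal D}_{\gamma_{\leq i+1}}}[Y_{\widetilde\gamma_{i+1}}\mid\gamma_{\leq i+1}]$, as claimed.

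I expect the only real obstacle to be the bookkeeping of what is conditioned on at each stage: in particular, verifying that after passing to ${\cal D}_{\gamma_{\leq i+1}}$ the ``hat'' term is still effectively conditioned only on $\gamma_{\leq i}$ (so that it still equals the $M_i$ term), while $Z_{\geq i+2}$ and $Y_{\widetilde\gamma_{i+1}}$ are averaged conditional on $\gamma_{\leq i+1}$. This is precisely where permutation-invariance of $\gamma$ and the exact form of the stochastic covering property (Fact~\ref{fac:scp}), as packaged into Eq.~\eqref{eq:the_equation_needs_coupling_table_exists}, do the work; everything else is linearity of expectation and pulling measurable factors through conditional expectations.
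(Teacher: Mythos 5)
Your proposal is correct and follows essentially the same route as the paper: expand $X_{i+1}=M_{i+1}-M_i$ via Eq.~\eqref{eq:mdecomps}, replace the $M_i$ term by an expectation over the coupling ${\cal D}_{\gamma_{\leq i+1}}$ using Eq.~\eqref{eq:the_equation_needs_coupling_table_exists}, cancel the $Z_{\leq i}$ and $Z_{\geq i+2}$ contributions, and pull the $\gamma_{\leq i+1}$-measurable term $Y_{\gamma_{i+1}}$ out of the conditional expectation. The only difference is cosmetic ordering (you extract the measurable terms first, the paper cancels them at the end), and your explicit remark that the $Z_{\geq i+2}$-marginal of the coupling matches the conditional law is exactly the justification the paper uses implicitly in its cancellation step.
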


\ifdefined\focsversion
We provide the proof of the above Claim in the full version.
\else
\begin{proof}
 Note that in the following proof, we should think of $\gamma_{\leq i+1 }$ as fixed.
\begin{align}\label{eq:rewrite_M_i_plus_1_minus_M_i}
X_{i+1}  
= & ~ M_{i+1} - M_i \notag \\
= & ~ \E_{\gamma_{\geq i+2}} \left[ Z ~|~ \gamma_{\leq i+1} \right] - \E_{\gamma_{\geq i+1}} \left[ Z ~|~  \gamma_{\leq i} \right] \notag \\
= & ~ \E_{  {\gamma}_{\geq i+2}} \left[  {Z}_{\geq i+2} + Y_{
    {\gamma}_{i+1}} + Z_{\leq i} ~\bigg|~ \gamma_{\leq i+1} \right] - \E_{\wh{\gamma}_{\geq i+1}} \left[
    \wh{Z}_{\geq i+1} + Z_{\leq i} ~\bigg|~  \gamma_{\leq i} \right]
    \notag \\
= & ~ \E_{ {\gamma}_{\geq i+2}} \left[  {Z}_{\geq i+2} + Y_{
    {\gamma}_{i+1}} + Z_{\leq i} ~\bigg|~ \gamma_{\leq i+1} \right] -
\E_{ (  {Z}_{\geq i+2} , Y_{ \widetilde{\gamma}_{i+1} } ) \sim {\cal D}_{\gamma_{\leq i+1}} } \left[
      {Z}_{ \geq i+2 }  + Y_{\widetilde{\gamma}_{i+1}}  + Z_{\leq i} ~\bigg|~  \gamma_{\leq i+1} \right]
\notag \\
= & ~ \E_{ (  {Z}_{\geq i+2} , Y_{ \widetilde{\gamma}_{i+1} } ) \sim {\cal D}_{\gamma_{\leq i+1}} } \left[ Y_{ {\gamma}_{i+1}} - Y_{\widetilde{\gamma}_{i+1}} ~|~ \gamma_{\leq i+1} \right]
\end{align}
where the first equality follows by definition of $X_{i+1}$, the
second equality follows by Definition~\ref{def:M}, the third equality
follows by Eq.~\eqref{eq:mdecomps},
the fourth equality follows by
Eq.~\eqref{eq:the_equation_needs_coupling_table_exists},
and the fifth equality is by linearity of expectation and cancellation of terms that agree.


Once we condition on $\gamma_{\leq i}$ and $ {\gamma}_{i+1}$ being
fixed, then $Y_{\gamma_{i+1}}$ is also fixed. Thus in
Eq.~\eqref{eq:rewrite_M_i_plus_1_minus_M_i}, we can move
$Y_{\gamma_{i+1}}$ out of Expectation, so that the right hand side of 
Eq.~\eqref{eq:rewrite_M_i_plus_1_minus_M_i} becomes
\begin{align}\label{eq:move_Y_gamma_i_plus_1_out_of_E}
\E_{ (  {Z}_{\geq i+2}, Y_{\widetilde{\gamma}_{i+1}} ) \sim {\cal D}_{\gamma_{\leq i+1}} } \left[ Y_{\gamma_{i+1}} - Y_{\widetilde{\gamma}_{i+1}} ~|~ \gamma_{ \leq i+1 }  \right] = Y_{{\gamma}_{i+1}} - \E_{ (  {Z}_{\geq i+2} , Y_{ \widetilde{\gamma}_{i+1} } ) \sim {\cal D}_{ \gamma_{\leq i+1}} } [ Y_{\widetilde{\gamma}_{i+1}} ~|~ \gamma_{\leq i+1 } ].
\end{align}

\Rasmus{Need to double check proposition 2.2 in \cite{pp14}. Need to cite something about why distribution ${\cal D}$ is actually existing}
\end{proof}
\fi

\begin{fact}\label{fac:four_properties_fact}
We condition on $\gamma_{\leq i+1}$. Let ${\cal D}_{\gamma_{\leq i+1}}$ denote the coupling distribution such that $ {Z}_{\geq i+2} + Y_{\widetilde{\gamma}_{i+1}} = \wh{Z}_{\geq i+1}$ holds. We define $U_{ {\gamma}_{i+1} }$ as follows 
\begin{align*}
U_{ {\gamma}_{i+1} } = \E_{ (  {Z}_{\geq i+2} , Y_{ \widetilde{\gamma}_{i+1} } ) \sim {\cal D}_{\gamma_{\leq i+1}} } [ Y_{\widetilde{\gamma}_{i+1}} ~|~ \gamma_{\leq i+1} ].
\end{align*}
Then, we have the following four properties,
\begin{align*}
\mathrm{(\RN{1})} & ~ \E_{ {\gamma}_{i+1}} [ U_{ {\gamma}_{i+1}} ~|~ \gamma_{\leq i} ] = \E_{\gamma_{i+1}} [ Y_{ {\gamma}_{i+1}} ~|~ \gamma_{\leq i} ], \\
\mathrm{(\RN{2})} & ~ \| Y_{ {\gamma}_{i+1}} \| \leq R, \| U_{ {\gamma}_{i+1}} \| \leq R,\\
\mathrm{(\RN{3})} & ~ \| Y_{\gamma_{i+1}} - U_{\gamma_{i+1}} \| \leq R, \\
\mathrm{(\RN{4})} & ~ Y_{ {\gamma}_{i+1}}^2 \preceq R \cdot Y_{ {\gamma}_{i+1}}, U_{ {\gamma}_{i+1}}^2 \preceq R \cdot U_{ {\gamma}_{i+1}}.
\end{align*}
\end{fact}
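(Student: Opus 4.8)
The plan is to verify the four properties in order, since each is a short deduction from the setup already established in the section. Property (I) is the key identity and it follows from the coupling in Equation~\eqref{eq:the_equation_needs_coupling_table_exists} together with the fact that $\wh{\gamma}_{\geq i+1}$ is a fresh copy of $\gamma_{\geq i+1}$. Concretely, I would first observe that $U_{\gamma_{i+1}} = \E[Y_{\widetilde\gamma_{i+1}} \mid \gamma_{\leq i+1}]$, and that by Equation~\eqref{eq:the_equation_needs_coupling_table_exists} we have $Y_{\widetilde\gamma_{i+1}} = \wh{Z}_{\geq i+1} - Z_{\geq i+2}$; taking expectation over $\gamma_{i+1}$ conditioned on $\gamma_{\leq i}$ and using that $\E_{\gamma_{i+1},\gamma_{\geq i+2}}[\wh Z_{\geq i+1} \mid \gamma_{\leq i}] = \E_{\wh\gamma_{\geq i+1}}[\wh Z_{\geq i+1}\mid \gamma_{\leq i}] = \E_{\gamma_{\geq i+1}}[Z_{\geq i+1}\mid\gamma_{\leq i}]$ while $\E[Z_{\geq i+2}\mid\gamma_{\leq i}]$ cancels against the $Z_{\geq i+2}$ piece of $Z_{\geq i+1}$, leaves exactly $\E_{\gamma_{i+1}}[Y_{\gamma_{i+1}}\mid\gamma_{\leq i}]$. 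Alternatively, and perhaps more cleanly, one can note that $\widetilde\gamma_{i+1}$ indexes a single entry that was set to one in some Strongly Rayleigh configuration, and since $U$ averages $Y$ over this index, the tower property applied through the coupling reduces it to the marginal law of a freshly drawn $(i+1)$-st index — which is the same distribution as $\gamma_{i+1}$ given $\gamma_{\leq i}$.

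Next, property (II): $\norm{Y_{\gamma_{i+1}}}\le R$ is immediate from the definition of the $Y$-operator, and $\norm{U_{\gamma_{i+1}}}\le R$ follows because $U_{\gamma_{i+1}}$ is a conditional expectation (hence a convex combination, via the averaging over the coupling) of matrices $Y_{\widetilde\gamma_{i+1}}$, each of operator norm at most $R$; the operator norm is convex, so it is preserved under averaging. Property (III) then follows from (I) and (II) via a short argument: $Y_{\gamma_{i+1}} - U_{\gamma_{i+1}} = Y_{\gamma_{i+1}} - \E[Y_{\widetilde\gamma_{i+1}}\mid\gamma_{\leq i+1}]$, and since both $Y_{\gamma_{i+1}}$ and every $Y_{\widetilde\gamma_{i+1}}$ in the average are PSD with norm at most $R$, the difference of a PSD matrix with norm $\le R$ and an average of PSD matrices with norm $\le R$ has norm at most $R$ (write $A - B$ with $0 \preceq A \preceq RI$ and $0 \preceq B \preceq RI$; then $-RI \preceq A - B \preceq RI$, so $\norm{A-B}\le R$). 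Finally property (IV): for any PSD matrix $A$ with $\norm{A}\le R$ we have $A \preceq R I$, hence $A^{1/2} A A^{1/2} \preceq A^{1/2}(R I)A^{1/2}$, i.e. $A^2 \preceq R A$; apply this with $A = Y_{\gamma_{i+1}}$ and with $A = U_{\gamma_{i+1}}$, the latter being PSD because it is a conditional expectation of PSD matrices.

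The main obstacle is getting property (I) completely rigorous, since it is the only statement that genuinely uses the stochastic covering coupling rather than just elementary matrix facts; in particular one must be careful that the coupling distribution ${\cal D}_{\gamma_{\leq i+1}}$ is set up so that, after averaging out $\gamma_{i+1}$ as well, the law of $\widetilde\gamma_{i+1}$ (marginally, given $\gamma_{\leq i}$) coincides with the law of $\gamma_{i+1}$ given $\gamma_{\leq i}$ — this is exactly the content of Equation~\eqref{eq:the_equation_needs_coupling_table_exists} combined with the fact that $\wh\gamma_{\geq i+1}$ is an exact fresh copy of $\gamma_{\geq i+1}$, so I would make sure to spell out that chain of equalities carefully. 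The remaining three properties are routine once (I) is in hand.
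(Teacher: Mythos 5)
Your proposal is correct and takes essentially the same route as the paper: properties (II)--(IV) are proved exactly as there (Jensen for $\| U_{\gamma_{i+1}} \| \leq R$, a norm bound on the difference of two PSD matrices of norm at most $R$ for (III), and $A \preceq R I \Rightarrow A^2 \preceq R A$ for (IV)). For (I), the paper reduces to $\E_{\gamma_{i+1}}[X_{i+1} \mid \gamma_{\leq i}] = 0$ via Claim~\ref{cla:rewrite_X_i_plus_1}, while you expand the coupling identity \eqref{eq:the_equation_needs_coupling_table_exists} and cancel $\E[Z_{\geq i+2} \mid \gamma_{\leq i}]$ against the corresponding piece of the fresh copy $\wh{Z}_{\geq i+1}$; this is the same cancellation carried out directly rather than through the martingale-difference claim, so the substance is identical.
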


\ifdefined\focsversion
We provide the proof of the above Fact in the full version.
\else
\begin{proof}
Proof of (\RN{1}). We have
\begin{align*}
\E_{\gamma_{i+1}} [ U_{\gamma_{i+1}} ~|~ \gamma_{\leq i} ] 
= & ~ \E_{\gamma_{i+1}} \left[  \E_{ (  {Z}_{\geq i+2} , Y_{ \widetilde{\gamma}_{i+1} } ) \sim {\cal D}_{\gamma_{\leq i+1}} } \left[ Y_{\widetilde{\gamma}_{i+1}} ~\big|~ \gamma_{\leq i+1} \right] ~\bigg|~ \gamma_{\leq i} \right] \\
= & ~ \E_{\gamma_{i+1}} \left[  \E_{ (  {Z}_{\geq i+2} , Y_{ \widetilde{\gamma}_{i+1} } ) \sim {\cal D}_{\gamma_{\leq i+1}} } \left[ Y_{\widetilde{\gamma}_{i+1}} ~\big|~ \gamma_{\leq i+1} \right] - Y_{\gamma_{i+1}} + Y_{\gamma_{i+1}} ~\bigg|~ \gamma_{\leq i} \right] \\
= & ~ \E_{\gamma_{i+1}} \left[ -X_{i+1} + Y_{\gamma_{i+1}} ~\big|~ \gamma_{\leq i} \right] \\
= & ~ \E_{\gamma_{i+1}} \left[ -X_{i+1}  ~\big|~ \gamma_{\leq i} \right] + \E_{\gamma_{i+1}} \left[  Y_{\gamma_{i+1}} ~\big|~ \gamma_{\leq i} \right] \\
= & ~ \E_{\gamma_{i+1}} \left[  Y_{\gamma_{i+1}} ~\big|~ \gamma_{\leq i} \right],
\end{align*}
where the third step follows by
Eq.~\eqref{eq:move_Y_gamma_i_plus_1_out_of_E} and
Eq.~\eqref{eq:rewrite_M_i_plus_1_minus_M_i}, the fourth step follows
by linearity of expectation, and the last step follows by $\E_{\gamma_{i+1}} [ -X_{i+1} ~|~ \gamma_{\leq i} ] = 0$.

Proof of (\RN{2}).

By definition of $Y$, we have
$
\| Y_{ {\gamma}_{i+1}} \| \leq R.
$

\begin{align*}
\| U_{ {\gamma}_{i+1}} \| 
= & ~ \left\| \E_{ (  {Z}_{\geq i+2} , Y_{ \widetilde{\gamma}_{i+1} } ) \sim {\cal D}_{\gamma_{\leq i+1}} } [ Y_{\widetilde{\gamma}_{i+1}} ~|~ \gamma_{\leq i+1} ] \right\| \\
\leq & ~ \E_{ (  {Z}_{\geq i+2} , Y_{ \widetilde{\gamma}_{i+1} } )
       \sim {\cal D}_{\gamma_{\leq i+1}} } \left[ \left\|
       Y_{\widetilde{\gamma}_{i+1}} \right\|  ~|~ \gamma_{\leq i+1} \right] \\
\leq & ~ R.
\end{align*}

Proof of (\RN{3}). For any two PSD matrices $A$ and $B$, we have $\| A
- B \| \leq \max( \| A \|, \| B \| )$. Because both $Y_{\gamma_{i+1}}$
and $U_{\gamma_{i+1}}$ are PSD matrices and $ \max ( \|
Y_{\gamma_{i+1}} \|, \| U_{\gamma_{i+1}} \| ) \leq R$, we get the
desired property.

Proof of (\RN{4}).

It follows by (\RN{2}) and that $Y_{ {\gamma}_{i+1}}$ and $U_{
  {\gamma}_{i+1}}$ are both PSD matrices.
\end{proof}
\fi

We can show 
\begin{claim}\label{cla:square_of_Y_e_minus_U_e_is_at_most_4_Y_e}
\begin{align*}
\E_{  {\gamma}_{i+1} } \left[ ( Y_{ {\gamma}_{i+1}} - U_{ {\gamma}_{i+1}} )^2 ~|~ \gamma_{\leq i} \right] \preceq 4 R \cdot \E_{ {\gamma}_{i+1}}[ Y_{ {\gamma}_{i+1}} | \gamma_{\leq i}]
\end{align*}
\end{claim}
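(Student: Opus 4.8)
The plan is to expand the square $(Y_{\gamma_{i+1}} - U_{\gamma_{i+1}})^2$ and bound each resulting piece using the properties collected in Fact~\ref{fac:four_properties_fact}. First I would apply Fact~\ref{fac:symmetric_matrix_triangle_inequality} with $A = Y_{\gamma_{i+1}}$ and $B = U_{\gamma_{i+1}}$, which gives the pointwise bound $(Y_{\gamma_{i+1}} - U_{\gamma_{i+1}})^2 \preceq 2 Y_{\gamma_{i+1}}^2 + 2 U_{\gamma_{i+1}}^2$. Then I would invoke property (\RN{4}) of Fact~\ref{fac:four_properties_fact}, namely $Y_{\gamma_{i+1}}^2 \preceq R \cdot Y_{\gamma_{i+1}}$ and $U_{\gamma_{i+1}}^2 \preceq R \cdot U_{\gamma_{i+1}}$, to conclude $(Y_{\gamma_{i+1}} - U_{\gamma_{i+1}})^2 \preceq 2R\, Y_{\gamma_{i+1}} + 2R\, U_{\gamma_{i+1}}$.

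Next I would take the conditional expectation $\E_{\gamma_{i+1}}[\,\cdot \mid \gamma_{\leq i}]$ of both sides; since the PSD order is preserved under taking expectations, this yields
\begin{align*}
\E_{\gamma_{i+1}}\left[ (Y_{\gamma_{i+1}} - U_{\gamma_{i+1}})^2 \mid \gamma_{\leq i} \right] \preceq 2R\, \E_{\gamma_{i+1}}[Y_{\gamma_{i+1}} \mid \gamma_{\leq i}] + 2R\, \E_{\gamma_{i+1}}[U_{\gamma_{i+1}} \mid \gamma_{\leq i}].
\end{align*}
Finally I would use property (\RN{1}) of Fact~\ref{fac:four_properties_fact}, which states $\E_{\gamma_{i+1}}[U_{\gamma_{i+1}} \mid \gamma_{\leq i}] = \E_{\gamma_{i+1}}[Y_{\gamma_{i+1}} \mid \gamma_{\leq i}]$, to replace the second term, giving the claimed bound $4R\, \E_{\gamma_{i+1}}[Y_{\gamma_{i+1}} \mid \gamma_{\leq i}]$.

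This proof is essentially a routine chaining of already-established facts, so I do not expect a serious obstacle; the only point requiring a little care is the legitimacy of the very first step — applying the matrix inequality $(A-B)^2 \preceq 2A^2 + 2B^2$ pointwise to the random matrices before taking expectations — and the fact that conditional expectation respects the semidefinite order, both of which are standard. The role of property (\RN{1}) (the negative-dependence / stochastic-covering consequence that $U$ and $Y$ have equal conditional means) is what makes the two cross-like terms collapse into a single clean term, and identifying that this is the crucial ingredient is the main conceptual content of the argument.
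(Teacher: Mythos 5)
Your proposal is correct and follows essentially the same route as the paper: Fact~\ref{fac:symmetric_matrix_triangle_inequality} to expand the square, property (\RN{4}) to reduce squares to $R$ times the matrices, and the equality of conditional means to collapse the $U$-term into the $Y$-term. The only difference is cosmetic — you make explicit the use of property (\RN{1}) in the final step, which the paper invokes implicitly when it replaces $\E_{\gamma_{i+1}}[2R\,Y_{\gamma_{i+1}} + 2R\,U_{\gamma_{i+1}} \mid \gamma_{\leq i}]$ by $4R\,\E_{\gamma_{i+1}}[Y_{\gamma_{i+1}} \mid \gamma_{\leq i}]$.
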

\begin{proof}
\begin{align*}
 & ~ \E_{  {\gamma}_{i+1} } \left[ ( Y_{ {\gamma}_{i+1}} - U_{ {\gamma}_{i+1}} )^2 ~|~ \gamma_{\leq i} \right] \\
\preceq & ~ \E_{  {\gamma}_{i+1} } \left[ 2 Y_{ {\gamma}_{i+1}}^2 + 2 U_{ {\gamma}_{i+1}}^2 ~|~ \gamma_{\leq i} \right] \\
\preceq & ~ \E_{  {\gamma}_{i+1} } \left[ 2 R \cdot Y_{ {\gamma}_{i+1}} + 2 R \cdot U_{ {\gamma}_{i+1}} ~|~ \gamma_{\leq i} \right] \\
\preceq & ~ \E_{  {\gamma}_{i+1} } \left[ 4 R \cdot Y_{ {\gamma}_{i+1}} ~|~ \gamma_{\leq i} \right],
\end{align*}
where the first step follows by Fact~\ref{fac:symmetric_matrix_triangle_inequality}, and the second step follows by $U_{ {\gamma}_{i+1}}^2 \preceq R \cdot U_{ {\gamma}_{i+1}}$ and $Y_{ {\gamma}_{i+1}}^2 \preceq R \cdot Y_{ {\gamma}_{i+1}}$.
\end{proof}

\begin{lemma}\label{lem:Y_e_is_at_most_1_over_n_minus_i}
Let $\E [ \sum_{e\in \Gamma} \xi_e Y_{e} ] \preceq \mu I$. 
For each $i \in \{1, 2, \cdots, k \}$, we have
\begin{align*}
\E_{ {\gamma}_i} \left[ Y_{ {\gamma}_i} ~|~ \gamma_{\leq i-1} \right] \preceq \frac{1}{k+1-i} \mu I.
\end{align*}
\end{lemma}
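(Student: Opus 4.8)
## Proof Proposal for Lemma~\ref{lem:Y_e_is_at_most_1_over_n_minus_i}

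The plan is to exploit the permutation-invariance of the representation $\gamma = (\gamma_1,\dots,\gamma_k)$ together with the shrinking-marginals property (Lemma~\ref{lem:intro_shrinking_marginals}) and the hypothesis $\E[\sum_{e}\xi_e Y_e] \preceq \mu I$. The key observation is that, conditional on $\gamma_{\leq i-1}$, the remaining indices $\gamma_i, \gamma_{i+1}, \dots, \gamma_k$ are themselves exchangeable (this is built into the construction of $\gamma$, which applies a uniformly random permutation to the indices; conditioning on a prefix of a fresh permutation still leaves the suffix exchangeable). Hence for every $j$ with $i \leq j \leq k$ we have
\begin{align*}
\E_{\gamma_j}\left[ Y_{\gamma_j} ~|~ \gamma_{\leq i-1}\right] = \E_{\gamma_i}\left[ Y_{\gamma_i} ~|~ \gamma_{\leq i-1}\right],
\end{align*}
where the right-hand side is the quantity we want to bound, call it $P_i := \E_{\gamma_i}[Y_{\gamma_i} \mid \gamma_{\leq i-1}]$.

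Next I would sum over $j$ from $i$ to $k$. Using exchangeability and recalling $Z_{\geq i} = \sum_{j=i}^{k} Y_{\gamma_j}$, we get
\begin{align*}
\E\left[ Z_{\geq i} ~|~ \gamma_{\leq i-1}\right] = \sum_{j=i}^{k} \E\left[ Y_{\gamma_j} ~|~ \gamma_{\leq i-1}\right] = (k+1-i)\, P_i.
\end{align*}
So it suffices to show $\E[Z_{\geq i} \mid \gamma_{\leq i-1}] \preceq \mu I$. To see this, note $Z_{\geq i} = Z - Z_{\leq i-1}$, and $Z = \sum_{e\in\Gamma} \xi_e Y_e$ with $Y_e \succeq 0$, so $Z_{\geq i} \preceq \sum_{e \notin \{\gamma_1,\dots,\gamma_{i-1}\}} \xi_e Y_e \preceq \sum_{e\in\Gamma} \xi'_e Y_e$, where $\xi'$ denotes the indicator vector of $\{\gamma_i,\dots,\gamma_k\}$, i.e. $\xi$ restricted to the coordinates outside the conditioned prefix. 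Taking conditional expectation and using that $Y_e \succeq 0$ together with Lemma~\ref{lem:intro_shrinking_marginals} (which says $\Pr[\xi_e = 1 \mid \xi_S = \mathbf{1}_S] \leq \Pr[\xi_e = 1]$ for the conditioning set $S = \{\gamma_1,\dots,\gamma_{i-1}\}$, valid since $|S| = i-1 \leq k$), we obtain
\begin{align*}
\E\left[ Z_{\geq i} ~|~ \gamma_{\leq i-1}\right] \preceq \sum_{e\in\Gamma} \Pr\left[\xi_e = 1 ~|~ \gamma_{\leq i-1}\right] Y_e \preceq \sum_{e\in\Gamma} \Pr\left[\xi_e = 1\right] Y_e = \E\left[ \sum_{e\in\Gamma} \xi_e Y_e\right] \preceq \mu I.
\end{align*}
Combining with $\E[Z_{\geq i} \mid \gamma_{\leq i-1}] = (k+1-i) P_i$ gives $P_i \preceq \frac{1}{k+1-i}\mu I$, as desired.

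The main obstacle I anticipate is making the exchangeability step fully rigorous: one must be careful that conditioning on $\gamma_{\leq i-1} = (e_1,\dots,e_{i-1})$ leaves the distribution of the multiset $\{\gamma_i,\dots,\gamma_k\}$ invariant under permutations of its positions, and that the shrinking-marginals lemma applies to the marginal $\Pr[\xi_e = 1 \mid \gamma_{\leq i-1}]$ — which requires recognizing that conditioning on $\gamma_{\leq i-1}$ is equivalent (for the distribution of the remaining $\xi$ coordinates) to conditioning on $\xi_S = \mathbf{1}_S$ with $S = \{e_1,\dots,e_{i-1}\}$, exactly as discussed earlier in the section. Once that identification is in place, the PSD manipulations are routine since all the $Y_e$ are positive semidefinite, so the only inequalities involved are monotonicity of $A \mapsto \sum_e c_e A$ under $c_e \geq 0$ and the operator-monotone use of the scalar bound on marginals.
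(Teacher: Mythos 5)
Your proposal is correct and follows essentially the same route as the paper: both arguments rest on the exchangeability of the $\gamma$-representation, the identification of conditioning on $\gamma_{\leq i-1}$ with conditioning on $\xi_S = \mathbf{1}_S$, the Shrinking Marginals Lemma~\ref{lem:intro_shrinking_marginals}, positive semidefiniteness of the $Y_e$, and the hypothesis $\E[\sum_e \xi_e Y_e] \preceq \mu I$. The only cosmetic difference is that you aggregate over the positions $i,\dots,k$ via $\E[Z_{\geq i} \mid \gamma_{\leq i-1}] = (k+1-i)P_i$, whereas the paper uses the equivalent pointwise identity $\Pr[\gamma_i = e \mid \gamma_{\leq i-1}] = \Pr[\xi_e = 1 \mid \gamma_{\leq i-1}]/(k-i+1)$ and bounds the single-position expectation directly.
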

\begin{proof}
We use ${\bf 1}$ to denote a length $i-1$ vector where each entry is
one. We can think of $\gamma_{\leq i-1}$ as having its values already
set to some edges in $\Gamma$, for example $\gamma_1 = e_1, \cdots,
\gamma_{i-1} = e_{i-1}$.
Note that all of the $e_1, \cdots, e_{i-1}$
must be distinct. Then we use $\Gamma \backslash \gamma_{\leq i-1}$ to denote $\Gamma \backslash \{e_1, \cdots, e_{i-1}\}$.
\begin{align*}
\E \left[ Y_{ {\gamma}_i} ~|~ \gamma_{\leq i-1} \right] 
= & ~ \sum_{e \in \Gamma \backslash \gamma_{\leq i-1}} \Pr[ \gamma_i = e ~|~ \gamma_{\leq i-1} ] \cdot  Y_{e} \\
= & ~ \sum_{e \in \Gamma \backslash \gamma_{\leq i-1} } \frac{\Pr[ \xi_e = 1 ~|~ \gamma_{\leq i-1} ]  }{k-(i-1)} \cdot  Y_{e} \\
= & ~ \sum_{e \in \Gamma \backslash \gamma_{\leq i-1}} \frac{\Pr[ \xi_e = 1 ~|~ \xi_{\gamma_{\leq i-1}} = \bf{1} ]  }{k-(i-1)} \cdot  Y_{e} \\
\preceq & ~ \sum_{e \in \Gamma \backslash \gamma_{\leq i-1}} \frac{\Pr[ \xi_e = 1 ]  }{k-(i-1)} \cdot  Y_{e} \\
\preceq & ~ \sum_{e \in \Gamma} \frac{\Pr[ \xi_e = 1 ]  }{k-(i-1)} \cdot  Y_{e} \\
= & ~ \frac{1}{k-(i-1)} \E\left[\sum_{e} \xi_e Y_e \right] \\
\preceq & ~ \frac{1}{k+1 - i} \mu I
\end{align*}
where the first step follows by definition of expectation, the second
step follows by  $\Pr[ \gamma_i = e ~|~ \gamma_{\leq i-1} ]  =
\Pr[\xi_e = 1 ~|~ \gamma_{\leq i-1} ] / (k-(i-1))$, the third step
follows because $[\cdot | \gamma_{\leq i-1}]$ is equivalent to $[\cdot
| \xi_{\gamma \leq i-1} = {\bf 1} ]$, the fourth step follows by
($\Pr[\xi_e =1 ~|~ \xi_{\gamma \geq i-1} = {\bf 1}] \leq \Pr[\xi_e =
1]$) from the Shrinking Marginals Lemma~\ref{lem:intro_shrinking_marginals}, the fifth step follows by relaxing $\Gamma \backslash \gamma_{\leq i-1}$, the sixth step follows by $\Pr[\xi_e = 1] = \E[\xi_e]$ and linearity of expectation, and the last step follows by $\E[\sum_{e\in \Gamma} \xi_e Y_e ] \preceq \mu I$.

\end{proof}

\begin{lemma}\label{lem:E_e_i_X_i_2_is_at_most_4_over_n_i_I}
For each $i \in \{1,2,\cdots, k\}$
\begin{align*}
\E_{\gamma_i} \left[ X_i^2 ~|~ \gamma_{\leq i-1} \right] \preceq 4 \mu R \frac{1}{k + 1 - i} I.
\end{align*}
\end{lemma}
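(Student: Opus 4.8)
The plan is to bound $\E_{\gamma_i}[X_i^2 \mid \gamma_{\leq i-1}]$ by rewriting $X_i$ using the representation from Claim~\ref{cla:rewrite_X_i_plus_1}, namely $X_i = Y_{\gamma_i} - U_{\gamma_i}$, where $U_{\gamma_i} = \E_{\mathcal{D}_{\gamma_{\leq i}}}[Y_{\widetilde{\gamma}_i} \mid \gamma_{\leq i}]$. (Here I am shifting the index in Claim~\ref{cla:rewrite_X_i_plus_1} from $i+1$ to $i$, which is harmless.) With this substitution the statement becomes
\begin{align*}
\E_{\gamma_i}\left[(Y_{\gamma_i} - U_{\gamma_i})^2 \mid \gamma_{\leq i-1}\right] \preceq \frac{4\mu R}{k+1-i} I.
\end{align*}

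The first step is to apply Claim~\ref{cla:square_of_Y_e_minus_U_e_is_at_most_4_Y_e} (again with the index shifted from $i+1$ to $i$), which gives
\begin{align*}
\E_{\gamma_i}\left[(Y_{\gamma_i} - U_{\gamma_i})^2 \mid \gamma_{\leq i-1}\right] \preceq 4R \cdot \E_{\gamma_i}\left[Y_{\gamma_i} \mid \gamma_{\leq i-1}\right].
\end{align*}
The second step is to apply Lemma~\ref{lem:Y_e_is_at_most_1_over_n_minus_i}, which bounds $\E_{\gamma_i}[Y_{\gamma_i} \mid \gamma_{\leq i-1}] \preceq \frac{1}{k+1-i}\mu I$. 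Combining the two (using that $A \preceq B$ with $A, B \succeq 0$ implies $cA \preceq cB$ for $c \geq 0$, and transitivity of $\preceq$) yields
\begin{align*}
\E_{\gamma_i}\left[(Y_{\gamma_i} - U_{\gamma_i})^2 \mid \gamma_{\leq i-1}\right] \preceq 4R \cdot \frac{1}{k+1-i}\mu I = \frac{4\mu R}{k+1-i} I,
\end{align*}
which is exactly the claimed bound.

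This proof is essentially a two-line chaining of the two immediately preceding results, so there is no real obstacle — the only thing to be careful about is the bookkeeping on the martingale index (the earlier claims are stated for $X_{i+1}$, $Y_{\gamma_{i+1}}$, $U_{\gamma_{i+1}}$ conditioned on $\gamma_{\leq i}$, and here we want the statement for $X_i$, $Y_{\gamma_i}$, $U_{\gamma_i}$ conditioned on $\gamma_{\leq i-1}$, which is the same statement with $i$ replaced by $i-1$). One should also note the edge case $i = k$: $X_k = M_k - M_{k-1}$ is still well-defined since $M_k = Z$, and the coupling construction and the bound $\E[Y_{\gamma_k}\mid \gamma_{\leq k-1}] \preceq \mu I$ both go through with $k+1-i = 1$, so the bound degenerates gracefully to $4\mu R \cdot I$.
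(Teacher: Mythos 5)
Your proposal is correct and follows essentially the same route as the paper, which proves this lemma by directly combining Claim~\ref{cla:square_of_Y_e_minus_U_e_is_at_most_4_Y_e} with Lemma~\ref{lem:Y_e_is_at_most_1_over_n_minus_i} (via the rewriting of $X_i$ from Claim~\ref{cla:rewrite_X_i_plus_1}). Your extra care with the index shift from $i+1$ to $i$ and the $i=k$ edge case is just explicit bookkeeping the paper leaves implicit.
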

\begin{proof}
It follows by combining Claim~\ref{cla:square_of_Y_e_minus_U_e_is_at_most_4_Y_e} and Lemma~\ref{lem:Y_e_is_at_most_1_over_n_minus_i} directly.
\end{proof}
The above lemma implies this corollary directly
\begin{corollary}\label{cor:bound_sum_of_X_square}
\begin{align*}
\sum_{i=1}^k \E_{\gamma_i} \left[ X_i^2 ~|~ \gamma_{\leq i-1} \right] \preceq 10 \mu R \log k \cdot I.
\end{align*}
\end{corollary}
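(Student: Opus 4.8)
The plan is to sum the per-step bound from Lemma~\ref{lem:E_e_i_X_i_2_is_at_most_4_over_n_i_I} over $i \in \{1,\ldots,k\}$. By that lemma, each conditional second moment satisfies $\E_{\gamma_i}[X_i^2 \mid \gamma_{\leq i-1}] \preceq 4\mu R \frac{1}{k+1-i} I$, so summing and using linearity gives
\begin{align*}
\sum_{i=1}^k \E_{\gamma_i}\left[ X_i^2 ~|~ \gamma_{\leq i-1}\right] \preceq 4\mu R \left( \sum_{i=1}^k \frac{1}{k+1-i} \right) I = 4\mu R \left( \sum_{j=1}^k \frac{1}{j} \right) I,
\end{align*}
after the change of index $j = k+1-i$. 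The remaining task is purely scalar: bound the harmonic number $H_k = \sum_{j=1}^k 1/j$.

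First I would recall the standard estimate $H_k \leq 1 + \ln k \leq 1 + \log k$ (or, depending on the base convention used elsewhere in the paper, $H_k = O(\log k)$ suffices). Then $4\mu R \, H_k \leq 4\mu R(1 + \log k)$. To get the clean stated constant $10 \mu R \log k$, I would note that for $k \geq 2$ we have $1 + \log k \leq 2.5 \log k$ when $\log$ is natural (since $\log 2 > 0.69$ forces $1/\log k \leq 1/\log 2 < 1.5$), hence $4\mu R(1+\log k) \leq 10 \mu R \log k$; the edge case $k=1$ has $H_1 = 1$ and the martingale is trivial, so it can be handled separately or absorbed. Therefore $\sum_{i=1}^k \E_{\gamma_i}[X_i^2 \mid \gamma_{\leq i-1}] \preceq 10 \mu R \log k \cdot I$, which is the claim.

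There is essentially no obstacle here — this is a one-line corollary — but the one point requiring minor care is the indexing of the sum and making sure the constant works uniformly in $k$ (in particular that the claimed $10$ subsumes the additive $1$ from the harmonic bound for all $k$ in the relevant range, and that small $k$ is not an exception). I would also double-check the logarithm base convention against the statement of Theorem~\ref{thm:srmatchernoff} so that the $\log k$ appearing here is consistent with the $\log k$ in the final Chernoff bound; since only a $\Theta(1)$ factor is claimed there, any fixed base is fine.
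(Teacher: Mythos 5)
Your proposal is correct and matches the paper's intended argument exactly: the paper derives this corollary ``directly'' from Lemma~\ref{lem:E_e_i_X_i_2_is_at_most_4_over_n_i_I} by summing the per-step bounds into a harmonic number and absorbing it into the constant $10$. Your constant check ($4(1+\log k)\leq 10\log k$ for $k\geq 2$) and your note that $k=1$ must be excluded or treated separately are both sound and, if anything, slightly more careful than the paper itself.
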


\subsection{Main result}

Before finally proving our main theorem~\ref{thm:srmatchernoff}, we state a useful tool: Freedman's inequality for matrices

We state a version from \cite{t11b}, and there is also another version can be found in \cite{o09}.
\begin{theorem}[Matrix Freedman]\label{thm:matrix_freedman}
Consider a matrix martingale $\{Y_i : i = 0, 1, 2, \cdots \}$ whose values are self-adjoint matrices with dimension $n$, and let $\{ X_i : i = 1,2,3,\cdots \}$ be the difference sequence. Assume that the difference sequence is uniformly bounded in the sense that
\begin{align*}
\lambda_{\max} (X_i) \leq R, \mathrm{~almost~surely~}  \mathrm{~for~} i = 1, 2, 3, \cdots .
\end{align*}
Define the predictable quadratic variation process of the martingale :
\begin{align*}
W_i = \sum_{j=1}^i \E_{j-1} [ X_j^2 ], \mathrm{~for~} i = 1, 2, 3, \cdots .
\end{align*}
Then, for all $t \geq 0$ and $\sigma^2 > 0$,
\begin{align*}
 & ~ \Pr \left[ \exists i \geq 0 : \lambda_{\max} (Y_i) \geq t \mathrm{~and~} \| W_i \| \leq \sigma^2 \right] \\
\leq & ~ n \cdot \exp \left( - \frac{t^2/2}{\sigma^2 + Rt/3} \right).
\end{align*}
\end{theorem}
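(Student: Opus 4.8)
The plan is to prove the statement by the matrix Laplace transform method together with a stopping-time reduction, following Tropp's template. Fix $t \geq 0$, $\sigma^2 > 0$, a parameter $\theta > 0$ to be optimized at the end, and (as is standard for Freedman-type bounds) assume $Y_0 = 0$. Write $\E_{i-1}[\cdot]$ for the conditional expectation given the first $i-1$ differences, set $V_j := \E_{j-1}[X_j^2]$, and note that $V_j$ and hence $W_i = \sum_{j=1}^i V_j$ are measurable with respect to the first $i-1$ differences. The central object is the scalar process $Z_i := \tr\exp\bigl(\theta Y_i - g(\theta) W_i\bigr)$, where $g(\theta) := (e^{\theta R} - \theta R - 1)/R^2 > 0$. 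First I would show $Z_i$ is a nonnegative supermartingale with $\E[Z_i] \leq n$; then the tail bound follows from a maximal inequality for nonnegative supermartingales and an optimization over $\theta$.

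\textbf{Step 1: a one-step matrix moment generating function bound.} I would show that for every $i$, $\E_{i-1}\bigl[e^{\theta X_i}\bigr] \preceq \exp\bigl(g(\theta)\, V_i\bigr)$. This rests on the elementary scalar inequality $e^{z} \leq 1 + z + f(c)\, z^2$, valid for \emph{all} $z \leq c$, where $f(c) = (e^c - c - 1)/c^2$ and $f$ is increasing on all of $\mathbb{R}$ (this one-sided fact is exactly why only $\lambda_{\max}(X_i) \leq R$ is needed, not a two-sided bound). Applying the transfer rule to $X_i$, whose eigenvalues are all $\leq R$, with $c = \theta R$ gives $e^{\theta X_i} \preceq I + \theta X_i + g(\theta) X_i^2$; taking $\E_{i-1}[\cdot]$, using $\E_{i-1}[X_i] = 0$ and the operator inequality $I + A \preceq e^A$, yields the claim.

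\textbf{Step 2: the supermartingale property.} Conditioning on the first $i$ differences, put $H := \theta Y_i - g(\theta) W_i - g(\theta) V_{i+1}$, a fixed self-adjoint matrix. Then
\begin{align*}
\E_i[Z_{i+1}]
& = \E_i\bigl[\tr\exp(H + \log e^{\theta X_{i+1}})\bigr] \\
& \leq \tr\exp\bigl(H + \log \E_i[e^{\theta X_{i+1}}]\bigr) \\
& \leq \tr\exp\bigl(H + g(\theta) V_{i+1}\bigr) = Z_i,
\end{align*}
where the first inequality is Jensen's inequality applied to the map $A \mapsto \tr\exp(H + \log A)$, which is concave on positive-definite matrices by Lieb's concavity theorem, and the second inequality uses Step 1 together with operator monotonicity of $\log$ and monotonicity of $\tr\exp(H + \cdot)$ in the PSD order. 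Since $Z_0 = \tr\exp(\theta Y_0) = \tr(I) = n$, this gives $\E[Z_i] \leq n$ for all $i$.

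\textbf{Step 3: stopping and optimization.} To handle the ``$\exists i$'' and the restriction $\|W_i\| \leq \sigma^2$, let $\tau := \inf\{i \geq 0 : \|W_{i+1}\| > \sigma^2\}$; since $W_{i+1}$ is measurable w.r.t.\ the first $i$ differences this is a stopping time, and since $W$ is monotone in the PSD order the stopped quadratic variation satisfies $W_{i\wedge\tau} \preceq \sigma^2 I$ for all $i$. Apply Steps 1--2 to the stopped martingale $\widetilde{Y}_i := Y_{i\wedge\tau}$ (its differences still have $\lambda_{\max} \leq R$ and its quadratic variation is $W_{i\wedge\tau}$) to obtain a nonnegative supermartingale $\widetilde{Z}_i = \tr\exp(\theta \widetilde{Y}_i - g(\theta) W_{i\wedge\tau})$ with $\E[\widetilde{Z}_i] \leq n$. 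On the event in the statement there is a time $i^\ast$ with $\lambda_{\max}(Y_{i^\ast}) \geq t$ and $\|W_{i^\ast}\| \leq \sigma^2$; PSD-monotonicity of $W$ forces $\tau \geq i^\ast$, so $\widetilde{Y}_{i^\ast} = Y_{i^\ast}$, and using $W_{i^\ast} \preceq \sigma^2 I$ and $\tr\exp(M) \geq e^{\lambda_{\max}(M)}$ gives $\widetilde{Z}_{i^\ast} \geq \exp(\theta t - g(\theta)\sigma^2)$. By Doob's maximal inequality for nonnegative supermartingales, $\Pr[\text{event}] \leq \Pr[\sup_i \widetilde{Z}_i \geq e^{\theta t - g(\theta)\sigma^2}] \leq n\exp\bigl(g(\theta)\sigma^2 - \theta t\bigr)$. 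Finally choose $\theta = \tfrac{1}{R}\log(1 + Rt/\sigma^2)$ and use the standard estimate $(1+u)\log(1+u) - u \geq \tfrac{u^2/2}{1+u/3}$ with $u = Rt/\sigma^2$ to get $g(\theta)\sigma^2 - \theta t \leq -\tfrac{t^2/2}{\sigma^2 + Rt/3}$, which is the claimed bound.

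\textbf{Main obstacle.} I expect the crux to be Step 2: the conditional expectation of $e^{\theta X_{i+1}}$ cannot be pulled inside the exponential naively because $X_{i+1}$ need not commute with $Y_i$, and the argument hinges on Lieb's concavity theorem — the only genuinely deep input — to legitimize the Jensen step. The stopping-time bookkeeping in Step 3 (verifying $\tau$ is a stopping time for the given filtration convention and that $W_{i\wedge\tau} \preceq \sigma^2 I$) and the scalar inequalities in Steps 1 and 3 are routine but must be handled with care.
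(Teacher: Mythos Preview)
Your proposal is correct and is essentially Tropp's own proof from \cite{t11b}: the one-sided scalar bound in Step~1, the use of Lieb's concavity to justify the Jensen step in Step~2, and the stopping-time reduction plus the Bennett-type optimization in Step~3 are exactly the ingredients of that argument, and your bookkeeping (predictability of $W_{i+1}$, the choice $\theta = \tfrac{1}{R}\log(1+Rt/\sigma^2)$, and the estimate $(1+u)\log(1+u)-u \geq \tfrac{u^2/2}{1+u/3}$) all checks out.

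However, the paper does \emph{not} prove this theorem at all: it is stated as a tool, attributed to \cite{t11b} (with an alternative version noted in \cite{o09}), and then applied as a black box in the proof of Theorem~\ref{thm:srmatchernoff}. So there is nothing to compare against --- you have supplied a self-contained proof where the paper simply cites the literature. If your goal was to reproduce the paper's treatment, a one-line citation would suffice; if your goal was an independent verification of the inequality, what you wrote is the right proof.
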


Now, we are ready to prove our main theorem,
\restate{thm:srmatchernoff}

\begin{proof}
We use $Y$ to denote $A$ and $\Gamma$ to denote $[m]$.

In order to use Theorem~\ref{thm:matrix_freedman}, we first we define $W_i$ as follows
\begin{align*}
W_i = \sum_{j=1}^i \E_{\gamma_i} \left[ X_i^2 ~|~ \gamma_{\leq i-1} \right].
\end{align*}

According to definition of $M_i$, $\{ M_0, M_1, M_2 \cdots \}$ is a matrix martingale and $M_k - M_0 = \sum_{e} \xi_e A_e - \E[ \sum_{e} \xi_e A_e ]$.

We have proved the following facts,

The first one is, $\E_{\gamma_{i}} [ X_i | \gamma_{\leq i-1}] = 0$. It follows by Eq.~\eqref{eq:X_i_is_zero_mean}

The second one is 
\begin{align*}
\lambda_{\max}(X_i) \leq R
\end{align*}
It follows by combining Property (\RN{3}) of Fact~\ref{fac:four_properties_fact} and Claim~\ref{cla:rewrite_X_i_plus_1}.

The third one is
\begin{align*}
\| W_i \| \leq \sigma^2, \forall i \in [k]
\end{align*}
where $ \sigma^2 = 10\mu R \log k$. It follows by Corollary~\ref{cor:bound_sum_of_X_square}.

Thus, 
\begin{align*}
\Pr \left[ \lambda_{\max} ( M_k - M_0 ) \geq \epsilon \mu \right]  \leq n \exp \left( -\frac{ (\epsilon \mu)^2/2}{ \sigma^2 + R (\epsilon \mu)/3 } \right).
\end{align*}
We have
\begin{align*}
\frac{ t^2/2}{ \sigma^2 + R t/3 } = & ~ \frac{\epsilon^2 \mu^2 /2}{10\mu R \log k + R \epsilon \mu /3} & \text{~by~choosing~} t = \epsilon \mu \\
= & ~ \frac{ 3 \epsilon^2 \mu }{ (60 \log k + 2\epsilon ) R }. 
\end{align*}
Thus we prove one side of the bound. Since $\E_{\gamma_i} [ -X_i | \gamma_{\leq i-1}] = 0$ and $\E_{\gamma_i} [ (-X_i)^2 | \gamma_{\leq i-1} ] = \E_{\gamma_i} [ X_i^2 | \gamma_{\leq i-1} ]$, then following the similar procedure as proving $\lambda_{\max}$, we have bound for $\lambda_{\min}$ 
\begin{align*}
\Pr[ \lambda_{\min} (M_k - M_0) \leq -\epsilon \mu ] \leq n \exp \left( - \frac{ 3 \epsilon^2 \mu }{ (60 \log k + 2\epsilon ) R } \right).
\end{align*}
Putting two sides of the bound together, we complete the proof.
\end{proof}

\section{Applications to Random Spanning Trees}

In this section, we show how to use Theorem~\ref{thm:srmatchernoff} to prove the bound for one random spanning and also summation of random spanning trees.

\restate{thm:intro_one_spanning_tree_upper}

\begin{proof}

Let $G = (V,E,w)$ be a undirected weighted graph, $w : E \to R$, which
is connected. 
The Laplacian of $G$ is $L_G = \sum_{e \in E}  w(e) b_e b_e^\intercal$.

Let $T \subseteq E$ be a random spanning tree of $G$ in the sense of
Definition~\ref{def:wtUnifTreeDistr}.
Let the weights of the edges in $T$ be given by $w' : T \to R$ where 
$w'(e) = w(e) / l_e$, where $l_e$ is the leverage score of $e$ in $G$.
Thus the Laplacian of the tree is $L_T = \sum_{e \in T}  w'(e) b_e
b_e^\intercal = \sum_{e \in T}  \frac{w(e)}{l_e} b_e b_e^\intercal$.
Then by Fact~\ref{fac:spantreemargin}, $Pr[ e \in T ] = l_e$, and
hence $\E[ L_T ] = L_G$.

Note also that for all $e \in E$,  $|| (L_G^\dagger)^{1/2} w(e)b_e
b_e^\intercal (L_G^\dagger)^{1/2} || = l_e $.
Consider the random matrix $ (L_G^\dagger)^{1/2}  L_T
(L_G^\dagger)^{1/2}$.
The distribution of edge in the spanning tree can be seen as an $n-1$
homogeneous vector in $\{0,1\}^m$ where $m = |E|$.
To apply Theorem~\ref{thm:srmatchernoff}, 
let $\xi_e$ be the $e$th entry of this random vector, and
\[
A_e =
 (L_G^\dagger)^{1/2} w'(e)b_e
b_e^\intercal (L_G^\dagger)^{1/2}
\]
Note $A_e \succeq 0$.
Now $|| A_e || = 1$ and $\E\left[ \sum_{e} \xi_e
      A_e\right] = \E[(L_G^\dagger)^{1/2} L_T (L_G^\dagger)^{1/2} ] =
    (L_G^\dagger)^{1/2} L_G (L_G^\dagger)^{1/2} = \Pi = I -
    \frac{1}{n} \mathbf{1}
    \mathbf{1}^\intercal$, where we used in the last equality that the
    null space of the Laplacian of a connected graph is the span of
    the all ones vector.
Thus, as each we get $||\E\left[ \sum_{e} \xi_e A_e\right]|| = 1$
This means we can apply Theorem~\ref{thm:srmatchernoff} with $R = 1$,
$\mu = 1$ and $\epsilon = 100 \log n $ to whp.
$ || (L_G^\dagger)^{1/2} L_T (L_G^\dagger)^{1/2} - \Pi || \leq 100 \log
n $.

As $L_T$ is a Laplacian, it has $\mathbf{1}$ in the null space, so can
conclude that $(L_G^\dagger)^{1/2} L_T (L_G^\dagger)^{1/2} \preceq 100
\log n \Pi$. Hence $ L_T \preceq \log n L_G$.

\end{proof}

\restate{thm:intro_sum_spanning_trees}

\begin{proof}
  The proof is similar to the proof of
  Theorem~\ref{thm:intro_one_spanning_tree_upper}.
  Now we view the edges of $ t = O(\epsilon^{-2} \log^2 n)$ 
  independent random spanning trees as a $t (n-1)$-homogeneous
  Strongly Rayleigh Distribution a vector in $\{0,1\}^{t |E|}$ .
  Note that the product of independent Strongly Rayleigh distributions
  is Strongly Rayleigh \cite{bbl09}.
  Again we get $||\E\left[ \sum_{e} \xi_e A_e\right]|| = 1$, but now
  we can take $R = \frac{1}{t}$, and hence we obtain the desired result
  by plugging into Theorem~\ref{thm:srmatchernoff}.
\end{proof}


\section{Lower bounds}

\subsection{Single spanning tree, low probability}

The goal of this section is to prove Theorem~\ref{thm:intro_one_spanning_tree_lower_low_probability}.
First, we recall a helpful fact estbliashed by Pr\"{u}fer \cite{p18}.
\begin{fact}
\label{fac:treedeg}
  If $T$ is a uniformly random spanning tree of the complete graph $G$
  on $n$ vertices, the degree distribution of a fixed node $v$ in $T$ is 
  $1 + \text{Binomial}(n-2,1/n)$. 
\end{fact}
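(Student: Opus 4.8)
The plan is to use the \emph{Pr\"{u}fer correspondence}, the classical bijection $\pi$ between spanning trees of the complete graph $K_n$ on vertex set $[n]$ and strings $\sigma \in [n]^{n-2}$. I would assemble two ingredients: (i) that $\pi$ is a bijection, so that the uniform distribution on the $n^{n-2}$ spanning trees of $K_n$ (there are $n^{n-2}$ of them by Cayley's formula, which itself follows from the bijection) pushes forward to the uniform distribution on $[n]^{n-2}$; and (ii) the structural identity that for every tree $T$ and every vertex $v$, the degree of $v$ in $T$ equals $1$ plus the number of coordinates of $\sigma = \pi(T)$ equal to $v$.

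First I would recall the encoding algorithm: repeatedly delete the smallest-labelled leaf of the current tree and append the label of its unique neighbour to $\sigma$, stopping when two vertices remain; this produces a string of length $n-2$. I would then prove identity (ii) by induction on $n$, using the recursive structure $\sigma(T) = w \cdot \sigma(T')$, where $u$ is the smallest leaf, $w$ is its neighbour, and $T' = T - u$. For a fixed $v \ne u$, passing from $T$ to $T'$ decreases both $\deg(v)$ and the occurrence count of $v$ by $1$ if $u \sim v$ (i.e. $w = v$) and leaves both unchanged otherwise, so the identity is preserved under the induction; the case $v = u$ is handled directly, since such a $v$ has degree $1$ and occurs $0$ times in $\sigma(T)$. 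Establishing this degree--occurrence identity cleanly across all cases — in particular the bookkeeping of whether $v$ is itself removed during the encoding — is the part of the argument that requires the most care, and is the main obstacle.

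With (i) and (ii) in hand the conclusion is immediate. If $T$ is a uniformly random spanning tree of $K_n$, then $\sigma = \pi(T)$ is a uniformly random element of $[n]^{n-2}$, so its $n-2$ coordinates are i.i.d.\ uniform on $[n]$. For a fixed vertex $v$, the number $N_v$ of coordinates equal to $v$ is then a sum of $n-2$ independent indicators each equal to $1$ with probability $1/n$, hence $N_v \sim \mathrm{Binomial}(n-2,1/n)$. By (ii), $\deg_T(v) = 1 + N_v$, which is exactly the claimed distribution $1 + \mathrm{Binomial}(n-2,1/n)$.

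An alternative that sidesteps proving (ii) in full would be to quote the enumeration formula that the number of labelled trees on $[n]$ in which a fixed vertex $v$ has degree $d$ equals $\binom{n-2}{d-1}(n-1)^{n-1-d}$, divide by Cayley's count $n^{n-2}$, and check algebraically that the ratio equals $\binom{n-2}{d-1}(1/n)^{d-1}(1-1/n)^{n-2-(d-1)}$; but since that enumeration formula is itself usually derived from the Pr\"{u}fer bijection, the direct route above is cleaner and self-contained, and the essential difficulty — identity (ii) — is the same either way.
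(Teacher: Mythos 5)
Your proof is correct: the Pr\"{u}fer bijection sends the uniform spanning tree to a uniformly random string in $[n]^{n-2}$, the degree of $v$ is one plus the number of occurrences of $v$ in that string, and the occurrence count of a fixed label among $n-2$ i.i.d.\ uniform coordinates is $\mathrm{Binomial}(n-2,1/n)$. The paper gives no proof of this fact and simply cites Pr\"{u}fer, so your argument is exactly the standard one underlying that citation.
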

We now prove two claims that will serve as helpful tools,
Claims~\ref{cla:probability_that_degree_of_a_fixed_node} and~\ref{cla:bad_approximation_ratio_for_one_graph}.
\begin{claim}\label{cla:probability_that_degree_of_a_fixed_node}
Let $G$ be complete graph $K_{n}$ with $n \geq 4$, let $T$ denote a random spanning tree, the probability that at least one node of the $T$ has degree at least $b \log n$ is at least $2^{- b \log n \log ( b \log n ) - 3}$.
\end{claim}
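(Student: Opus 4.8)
The plan is to lower-bound the probability that some fixed vertex $v$ has degree at least $b \log n$ in the random spanning tree $T$ of $K_n$, and then drop the "at least one node" phrasing to just this single-vertex event (which only loses a factor, since one vertex achieving the bound suffices). By Fact~\ref{fac:treedeg}, the degree of $v$ is distributed as $1 + \mathrm{Binomial}(n-2, 1/n)$, so it suffices to show $\Pr[\mathrm{Binomial}(n-2,1/n) \geq b\log n - 1] \geq 2^{-b\log n \log(b\log n) - 3}$. Actually it is cleaner to bound $\Pr[\mathrm{Binomial}(n-2,1/n) \geq b\log n]$ from below, which is stronger and absorbs the $-1$.

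First I would isolate a single term of the binomial sum: writing $k = \lceil b\log n \rceil$ and $N = n-2$,
\begin{align*}
\Pr[\mathrm{Binomial}(N,1/n) \geq k] \geq \binom{N}{k}\left(\frac{1}{n}\right)^k\left(1-\frac{1}{n}\right)^{N-k}.
\end{align*}
Then I would lower-bound each factor. For the binomial coefficient, use $\binom{N}{k} \geq (N/k)^k$, and since $N = n-2 \geq n/2$ for $n \geq 4$, this gives $\binom{N}{k} \geq (n/(2k))^k$. Combining with $(1/n)^k$ yields $\binom{N}{k} n^{-k} \geq (2k)^{-k}$. For the last factor, $(1-1/n)^{N-k} \geq (1-1/n)^n \geq 1/4$ for $n \geq 2$ (using the standard bound $(1-1/n)^n \geq 1/4$, or more carefully $e^{-1}(1-1/n) \geq \ldots$; a crude constant like $1/4$ is all that's needed). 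So the whole expression is at least $\frac{1}{4}(2k)^{-k}$.

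It remains to check $\frac{1}{4}(2k)^{-k} \geq 2^{-b\log n\log(b\log n) - 3}$, i.e. that $(2k)^{k} \leq 2^{b\log n \log(b\log n)+1}$ where $k = \lceil b\log n\rceil$. Taking $\log_2$, the left side is $k\log_2(2k) = k(1 + \log_2 k)$; with $k \leq b\log n + 1$ and, say, $\log_2 k \leq \log_2(b\log n) + 1$ for $n$ not too small, this is bounded by roughly $(b\log n + 1)(\log_2(b\log n) + 2)$, which one checks is at most $b\log n\log(b\log n) + 1$ after expanding — here I would be slightly careful about whether $\log$ denotes $\log_2$ or $\ln$ and about the small additive slack, possibly tightening the constant $-3$ or the range of $n$ if needed. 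The main obstacle is purely bookkeeping: making the constants in the final inequality $(2k)^k \leq 2^{b\log n\log(b\log n)+1}$ work out cleanly for all $n \geq 4$ and the relevant range of $b$ (presumably $b$ on the order of $1/\log\log n$ up to constants, as used in the theorems); the probabilistic content is entirely captured by retaining one term of the binomial expansion.
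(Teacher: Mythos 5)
Your overall approach is the same as the paper's: lower bound the probability for a single fixed vertex via Fact~\ref{fac:treedeg} and retain one term of the binomial distribution. However, there is a genuine quantitative gap at the step where you replace $n-2$ by $n/2$. That substitution turns the single-term bound into $\tfrac14 (2k)^{-k}$ with $k\approx b\log n$, and the extra factor $2^{-k}=2^{-b\log n}$ is not ``bookkeeping slack'': your final inequality $(2k)^k \leq 2^{b\log n\log(b\log n)+1}$ reduces (taking $\log_2$ and using $k\geq b\log n$) to $b\log n + b\log n\log(b\log n) \leq b\log n\log(b\log n)+1$, i.e.\ $b\log n\leq 1$, which fails in every regime where the claim is used (e.g.\ $b=20$ in Theorem~\ref{thm:intro_one_spanning_tree_lower_low_probability}, or $b=1$ with $n$ replaced by $n^\delta$ in Theorem~\ref{thm:intro_one_spanning_tree_lower_high_probability}). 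A loss of $2^{-b\log n}$ cannot be absorbed by adjusting the additive constant $-3$ in the exponent, so the claim as stated does not follow from your chain of inequalities.

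The fix is small and is exactly what the paper does: do not decouple $\binom{n-2}{k}$ from the factor $n^{-k}$. Bound $\binom{n-2}{k} \geq \left(\frac{n-2}{k}\right)^k$, so that $\binom{n-2}{k}n^{-k} \geq k^{-k}\left(\frac{n-2}{n}\right)^k \geq k^{-k}\left(1-\frac{2}{n}\right)^k$, and then merge this with $(1-1/n)^{n-2-k} \geq (1-2/n)^{n-2-k}$ to obtain $k^{-k}(1-2/n)^{n-2} \geq k^{-k}e^{-2}$ for $n\geq 4$. With $k=b\log n$ this is $(b\log n)^{-b\log n}e^{-2} \geq 2^{-b\log n\log(b\log n)-3}$, which is the claimed bound; the key point is that the $n^{-k}$ is cancelled by the $(n-2)^k$ in the binomial coefficient, costing only a constant rather than $2^{-b\log n}$.
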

\begin{proof}
By Fact~\ref{fac:treedeg}, the degree distribution of a fixed node in $T$ is, $1 + \text{Binomial}(n-2,1/n)$. 

For a random variable $x$ sampled from $\text{Binomial}(n-2,1/n)$, we use $q_i$ to denote the probability that $x = i$.

Let $p = 1/n$. We consider $q_{b \log n}$, which is
\begin{align*}
q_{b \log n} = & ~ {n-2 \choose b \log n} \cdot p^{b \log n} \cdot (1-p)^{n -2 - b\log n} \\
= & ~ {n-2 \choose b \log n} \cdot ( 1/n )^{b \log n} \cdot (1- 1/n)^{n - 2 - b \log n} & \text{~by~} p = 1/n \\
\geq & ~ ( (n - 2) / (b \log n) )^{ b \log n} \cdot (1/n)^{b \log n} \cdot (1 - 1/n)^{n - 2 - b \log n} \\
= & ~ (b \log n)^{- b \log n} \cdot ( (n-2) / n )^{b \log n} \cdot (1- 1/n)^{ n - 2 - b \log n} \\
\geq & ~ (b \log n)^{- b \log n} \cdot (1  - 2/n )^{b \log n} \cdot (1-2/n)^{ n - 2 - b \log n } \\
= & ~ (b \log n)^{- b \log n} \cdot (1-2/n)^{n-2} \\
\geq & ~ (b \log n)^{- b \log n} \cdot \frac{1}{e^2}\\
\geq & ~ 2^{- b \log n \cdot \log (b \log n ) - 3}.
\end{align*}
where the seventh step follows by $(1-2/n)^{n-2} \geq 1/e^2$ when $n \geq 4$.
Then the desired probability is
\begin{align*}
\sum_{i= b \log n}^{n} q_i \geq 2^{- b \log n \cdot \log (b \log n ) - 3}.
\end{align*}
\end{proof}

\begin{claim}\label{cla:bad_approximation_ratio_for_one_graph}
Let $G$ be a complete graph $K_{n}$, let $T$ denote a random spanning
tree, if $T$ has a node with degree at least $d$, then 
the inverse leverage score weighted Laplacian of the tree satisfies
\begin{align*}
L_T \not\preceq (d/2) \cdot L_G. 
\end{align*}
\end{claim}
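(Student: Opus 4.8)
The plan is to exhibit one explicit test vector that witnesses the failure of the PSD inequality. First I would pin down the edge weights in the reweighted tree. In $K_n$ all edge weights are $1$, and the effective resistance between any pair of vertices is $2/n$ (for instance by the series–parallel reduction $\tfrac{1\cdot 2}{1+2}$ generalized, or directly from $L_{K_n} = n\Pi$ with $\Pi = I - \tfrac1n \mathbf{1}\mathbf{1}^\top$). Hence every edge has leverage score $l_e = 2/n$, so its inverse-leverage-score weight is $w'(e) = n/2$, and therefore $L_T = \frac n2 \sum_{e \in T} b_e b_e^\top$, i.e., $\frac n2$ times the unweighted Laplacian of $T$.

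Next, let $v$ be a vertex of $T$ with $\deg_T(v) \geq d$, and take $x$ to be the standard basis vector at $v$. Using $x^\top\!\big(\sum_{(a,b)} b_e b_e^\top\big) x = \sum_{(a,b)} (x_a - x_b)^2$, only the edges of $T$ incident to $v$ contribute on the tree side, each contributing exactly $1$, so $x^\top L_T x = \frac n2 \deg_T(v) \geq \frac{nd}{2}$. On the graph side, $x^\top L_G x = x^\top L_{K_n} x = \deg_{K_n}(v) = n-1$.

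Finally I would compare the two quantities: $\frac{nd}{2} > \frac d2 (n-1)$ holds for every $n \geq 2$, so $x^\top L_T x > (d/2)\, x^\top L_G x$, which means $L_T \not\preceq (d/2)\, L_G$. There is essentially no obstacle; the only step requiring a moment's care is the computation of the effective resistance in $K_n$ (hence the reweighting factor $n/2$), and noting that the strict inequality $n > n-1$ is exactly what yields strictness of the spectral violation. One could equivalently take $x = \Pi \mathbf{1}_v$ to stay in the orthogonal complement of the kernel of $L_G$, but this is not necessary.
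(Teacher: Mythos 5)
Your proposal is correct and follows essentially the same route as the paper: both arguments compute the uniform leverage score $2/n$ in $K_n$ (so each tree edge gets weight $n/2$) and then witness the violation locally at the high-degree vertex, the paper via $\lambda_{\max}$ of the weighted star ($\tfrac{n}{2}(d+1)$) against $\tfrac{d}{2}\lambda_{\max}(L_G)=\tfrac{d}{2}n$, and you via the single test vector $e_v$ giving $\tfrac{n}{2}\deg_T(v)$ against $\tfrac{d}{2}(n-1)$. Your version is a slight streamlining of the same idea and is complete as stated.
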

\begin{proof}
There are $n(n-1)/2$ edges in the graph $G$. Let $l_e$ denote the
leverage of each $e \in G$. The the sum of the leverage scores is
$\sum_{e \in G} l_e = n-1$, e.g. see \cite{ss11}. Since all the edges
in the graph $G$ are symmetric, we have $l_e = 2/n$ for all edge $e$ in $G$.

Let $v$ denote a fixed node in graph $G$ and let $d$ be the degree of
$v$. Let $L_v$ denote the Laplacian matrix of the subgraph of $T$
consisting of edges incident on $v$, i.e. the star of $d+1$ nodes with
$v$ at the center, and with edge weights as in $T$ (which differ from
those in $G$).
We should think of $L_v$ as a $n \times n$ matrix with only $d+1$
nonzeros on the diagonal.

Observe that $\lambda_{\max}(L_v) \leq \frac{n}{2}\lambda_{\max}(L_{K_{d+1}})
\leq \frac{n}{2} (d+1)$.
We can also exhibit a unit vector $x = \frac{1}{\sqrt{d^2 + d}}
(d,-1,\cdots,-1) $, for which $x^\top L_v x = \frac{n}{2} (d+1)$ which implies that
$\lambda_{\max}(L_v) \geq \frac{n}{2} (d+1)$.
Therefore $\lambda_{\max}(L_v) =\frac{n}{2} (d+1)$.

We can split the $L_T$ into two parts,
\begin{align*}
L_T = L_v + L_{T\backslash v}
\end{align*}
and both parts are PSD matrices.
We also know that $\lambda_{\max}(L_G) = n$.
Thus,
\begin{align*}
L_T \not\preceq (d/2) \cdot L_G.
\end{align*}
\end{proof}

\restate{thm:intro_one_spanning_tree_lower_low_probability}
\begin{proof}
The proof is a direct combination of Claim~\ref{cla:probability_that_degree_of_a_fixed_node} and Claim~\ref{cla:bad_approximation_ratio_for_one_graph}.

The approximation ratio is
\begin{align*}
\frac{d}{2} = \frac{ b \log n }{ 2 } = 10 \log n,
\end{align*}
where the last step follows by choosing $b = 20$.

Then the probability is
\begin{align*}
2^{- b \log n \log (b \log n) - 3} = & ~ 2^{ - b \log n \log\log n - b \log n \log b - 3  } \\
\geq & ~ 2^{ - 20 \log n \log\log n - 20 \log n \log 20 - 3 } \\
\geq & ~ 2^{-150 \log n \log \log n}
\end{align*}
where the last step follows by $\log \log n \geq 1$ when $n \geq 4$.
\end{proof}

\subsection{Single spanning tree, high probability}

\restate{thm:intro_one_spanning_tree_lower_high_probability}

\begin{proof}

Let $C = 4$, and let $\delta = 1/(C\log\log n)$.
Note we have assumed $n \geq 2^{2^6}$, which ensures that $\delta < 0.05$.

We constuct a graph of size $n$ as a union of $n^{1-\delta}$
cliques of size $n^{1-\delta}$ that are disjoint except they all share
one central vertex.
Applying Claim~\ref{cla:probability_that_degree_of_a_fixed_node} with
$n$ replaced by $n^{\delta}$, and $b=1$, and assuming
$n^\delta \geq 8$, we get that 
for each clique, the probability that at least one node has degree at least $\log n^{\delta}$ in $T$ is at least
\begin{align*}
2^{-\delta c_0 \log n \cdot \log (\delta \log n)} 
\end{align*}
where $c_0 = 2$.
We can lower bound this probability:
\begin{align*}
2^{-\delta c_0 \log n \cdot \log (\delta \log n)} = & ~ 2^{ - \frac{c_0 \log n}{C\log \log n} \log ( \frac{1}{C\log\log n} \log n ) } \\
\geq & ~ 2^{ - \frac{c_0 \log n}{C\log \log n} \log \log n } \\
= & ~ 2^{- (c_0 \log n) / C} \\
= & ~ n^{-c_0/C},
\end{align*}
where the first step follows by $\delta = 1/(C\log\log n)$.

The probability that at least one node in $G$ has degree at least $\log n^{\delta}$ in $T$ is at least 
\begin{align*}
1- ( 1 - 2^{-\delta c_0 \log n  \log (\delta \log n) } )^{n^{1-\delta}} 
= & ~ 1 - \left(1 - \frac{1}{n^{c_0/C}}\right)^{ n^{1-\delta} } \\
= & ~ 1 - \left(1 - \frac{1}{n^{c_0/C}}\right)^{ n^{c_0 /C} \cdot \frac{n^{1-\delta}}{n^{c_0/C}} } \\
\geq & ~ 1 - (1/e)^{n^{1-\delta-{c_0/C}}} \\
\geq & ~ 1 - (1/e)^{n^{0.4}},
\end{align*}
where the last step follows by $\delta \leq 0.05$, $c_0 = 2$ and $C =
4$.
Thus, we have the desired probability.

Using Claim~\ref{cla:bad_approximation_ratio_for_one_graph}, we have the approximation ratio 
\begin{align*}
\frac{d}{2} = \frac{ \log (n^\delta) }{2} = \frac{ \delta \log n }{2} = \frac{ \log n }{ 2 C \log \log n} = \frac{\log n}{8 \log \log n}.
\end{align*}

Note that we still need to make sure $n^{\delta} \geq 8$, which is
implied by $2^{\log n / 4 \log \log n} \geq 2^3$ which is equivalent
to $\log n \geq 12 \log \log n$.
This holds for all $n\geq 2^{2^6}$:
At $n=2^{2^6}$, $\log n = 2^6 > 60 = 12 \cdot 5 = 12 \log \log n$, and
as $n$ grows, the left hand side grows faster than the right hand side.

\end{proof}

\subsection{Sum of a batch of spanning trees}

\restate{thm:intro_multiple_spanning_tree_lower_high_probability}

\begin{proof}

We constuct a graph of size $n$ as a union of $n^{1-\delta}$
cliques of size $n^{1-\delta}$ that are disjoint except they all share
one central vertex.
The parameter $\delta \in (0,1)$ will be decided later.

We use $H$ to denote the graph formed by a collection of trees $T_1, T_2, \cdots, T_t$. 
Let $L_H$ denote the Laplacian matrix of new graph $H$. 

We use $\deg(v)$ to denote the degree of a vertex $v$.
We use $\wdeg(v)$ to denote weighted degree (after re-weighting).
In the original graph $G$ and the new graph $H$,
we have for each vertex $v$, that 
\begin{align*}
\wdeg_H(v) = \frac{d}{2 t} \deg_H(v), \text{~and~} \wdeg_G(v) = \deg_G(v).
\end{align*}
By our construction of the graph, it is easy to see that $\deg_G(v) =
n^\delta$ for all vertices $v$ except the special central vertex that
appears in all the cliques.
Let $d=n^\delta$.
Let $\xi_1$ denote 
the event that there exists a vertex $x \in V$ such that
\begin{align*}
\wdeg_H(x) > (1+\epsilon) \wdeg_G(x),
\end{align*}
and let $\xi_2$ denote that
there exists a vertex $y\in V$, such that
\begin{align*}
\wdeg_H(y) < (1-\epsilon) \wdeg_G(y).
\end{align*}
We want to show that events $\xi_1$ and $\xi_2$ both occur
simultaneously with probability at least $1-e^{-n^{0.39}}$, which
implies absence of spectral $(1\pm\epsilon)$ approximation, as desired.
We first bound the probability of $\xi_1$.
Note that
\begin{align*}
\Pr [ \wdeg_H(v) \geq (1+\epsilon) \wdeg_G(v) ] = & ~ \Pr \left[ \frac{ d }{ 2 t } \deg_H(v) \geq (1+\epsilon) \deg_G(v) \right] \\
\geq & ~ \Pr \left[ \frac{ d }{ 2 t } \deg_H(v) \geq (1+\epsilon) d \right] \\
= & ~ \Pr[ \deg_H(v) \geq 2 t (1+\epsilon) ] \\
= & ~ \Pr[ \deg_H(v) - t \geq t( 1 + 2\epsilon ) ].
\end{align*}

By Fact~\ref{fac:treedeg}, the degree of a fixed node in $T_i$ is
distributed as $1+\text{Binomial}(d-2,1/d)$. Then as $H$ is a union of
independent spanning trees, the degree in $H$ of a fixed node is
distributed as $t+ \text{Binomial}(t(d-2), 1/d)$.

For a random variable $x$ sampled from $\text{Binomial}(t(d-2),1/d)$,
we know that $\E[x]=t(1-2/d)$.
For $\epsilon > 5/d$
\begin{align*}
t( 1 + 2\epsilon ) 
= & ~ t(1-2/d) + (2 t /d - 2 \epsilon t + 8 \epsilon t /d ) + 4 \epsilon t (1-2/d) \\ 
\leq & ~ t(1-2/d) + (10t /d - 2\epsilon t) + 4 \epsilon t (1-2/d) & \text{~by~}\epsilon \leq 1\\
\leq & ~ t (1-2/d) + 4\epsilon t (1-2/d) & \text{~by~} \epsilon > 5 /d.
\end{align*}
So it suffices to calculate the probability that 
\begin{align}\label{eq:x_geq_1+eps_t}
x \geq t (1-2/d) + 4\epsilon t (1-2/d).
\end{align}

For any $k \geq 10, p \in (0,1/2), \epsilon \in (0,1/2)$ with $\epsilon^2 p k \geq 3$,
using Lemma~\ref{lem:reverse_chernoff_bound}, 
we can prove the probability that Eq.~\eqref{eq:x_geq_1+eps_t} holds is at least $2^{-c \epsilon^2 k p}$, 
where $c=9$. 
We choose $k = t(d-2)$ and $p=1/d$, and get that this probability
is at least $2^{-c \epsilon^2 t(d-2)/d}$.
Now, the probability that event $\xi_1$ holds is at least
\begin{align*}
1 - (1 - 2^{-c \epsilon^2  t (1-2/d)} )^{n^{1-\delta}}
\end{align*}
We have
\begin{align*}
2^{-c \epsilon^2  t (1-2/d)} 
\geq & ~ 2^{-c \epsilon^2 t } \\
\geq & ~ 1 / n^{0.5}
\end{align*}
where the last step follows by $t \leq 0.5 \log n/(c\epsilon^2)$.

Thus, we have
\begin{align*}
1 - (1 - 2^{-c \epsilon^2  t (1-2/d)} )^{n^{1-\delta}} 
\geq & ~ 1 - (1 - 1/n^{0.5})^{n^{1-\delta}} \\
\geq & ~ 1 - e^{-n^{1-\delta-0.5}} \\
= & ~ 1 - e^{-n^{0.4}} & \text{~by~} \delta = 0.1
\end{align*}

We summarize the conditions for $\epsilon$:
\begin{align*}
\epsilon \geq & ~ \max(5/d, 3 / \sqrt{pn}) \\
= & ~ \max ( 5 / n^{0.1}, 3 /\sqrt{pn} ) & \text{~by~} d= n^{0.1} \\
\geq & ~ \max (5/n^{0.1}, 5 /\sqrt{t})
\end{align*}
Since we choose $t = 0.05 \epsilon^{-2} \log n $, then as long as $\log n \geq 100$ we have $\epsilon \geq 5 / \sqrt{t}$.

Similarly, we can control the probability of event $\xi_2$ similarly,
completing the proof.

\end{proof}


\section{Shrinking Marginals Lemma}\label{sec:marginals}

\begin{lemma}[Restatement of Lemma~\ref{lem:intro_shrinking_marginals}, Shrinking Marginals]
Suppose $ (\xi_1, \ldots, \xi_m) \in  \{ 0,1\}^m$ is a
random vector of $\{0,1\}$ variables whose distribution is
$k$-homogeneous and Strongly Rayleigh,
then any set $S \subseteq [m]$ with $|S| \leq k$
for all $j \in [m]\setminus S$
\[
\Pr[ \xi_j = 1 | \xi_S = \mathbf{1}_S]
\leq
\Pr[ \xi_j = 1 ] 
\]
\end{lemma}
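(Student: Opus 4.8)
The plan is to prove the statement by induction on $|S|$, using only Fact~\ref{fac:srcondclosure} (closure of $k$-homogeneous Strongly Rayleigh distributions under conditioning on coordinates) and Fact~\ref{fac:scp} (the stochastic covering property). The base case $|S|=1$ will follow directly from stochastic covering, and the inductive step from peeling off one conditioned coordinate at a time, each time restoring a Strongly Rayleigh distribution of one lower degree of homogeneity via Fact~\ref{fac:srcondclosure}. Throughout, if a conditioning event has probability zero the claimed inequality is vacuous, so I would assume all relevant conditional probabilities are well defined.

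For the base case, take $S=\{i\}$ and $j\neq i$. If the distribution is $1$-homogeneous then $\xi_i=1$ forces $\xi_j=0$ and the inequality is immediate. Otherwise, Fact~\ref{fac:scp} gives a coupling of $\xi' = \xi_{[m]\setminus\{i\}}$ (unconditional) with $\xi'' = \xi_{[m]\setminus\{i\}}$ (conditional on $\xi_i=1$) in which, in every outcome, $\xi'$ is obtained from $\xi''$ by flipping at most one coordinate from $0$ to $1$; hence $\xi'\geq\xi''$ coordinatewise in every outcome. Taking the expectation of the $j$-th coordinate under this coupling yields $\Pr[\xi_j=1\mid \xi_i=1] = \E[\xi''_j] \leq \E[\xi'_j] = \Pr[\xi_j=1]$, which is the desired base case, and it holds for \emph{any} $k'$-homogeneous Strongly Rayleigh distribution with $k'\geq 1$.

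For the inductive step, write $S = S'\cup\{i\}$ with $i\notin S'$ and $|S'| = |S|-1 \leq k-1$, and let $j\in[m]\setminus S$. Conditioning on $\xi_{S'}=\mathbf{1}_{S'}$, Fact~\ref{fac:srcondclosure} tells us the conditional law of $\xi_{[m]\setminus S'}$ is $(k-|S'|)$-homogeneous Strongly Rayleigh with $k-|S'|\geq 1$, so the base case applied to this conditional distribution and the pair of indices $i,j\in[m]\setminus S'$ gives $\Pr[\xi_j=1\mid \xi_{S'}=\mathbf{1}_{S'},\,\xi_i=1] \leq \Pr[\xi_j=1\mid \xi_{S'}=\mathbf{1}_{S'}]$. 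The inductive hypothesis applied to the set $S'$ (of size $|S|-1\leq k$) gives $\Pr[\xi_j=1\mid \xi_{S'}=\mathbf{1}_{S'}] \leq \Pr[\xi_j=1]$, and chaining the two inequalities closes the induction since $\Pr[\xi_j=1\mid \xi_S=\mathbf{1}_S] = \Pr[\xi_j=1\mid \xi_{S'}=\mathbf{1}_{S'},\,\xi_i=1]$.

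I do not expect a substantive obstacle here: the real content is bundled into Facts~\ref{fac:srcondclosure} and~\ref{fac:scp}, both tracing back to \cite{bbl09}. The only care required is the homogeneity bookkeeping — each conditioning on a ``one'' drops the degree of homogeneity by exactly one, so the hypothesis $|S|\leq k$ is precisely what guarantees the conditional distribution stays Strongly Rayleigh with homogeneity at least $1$ at every step, allowing the stochastic covering property to be invoked — together with the trivial handling of probability-zero conditioning events. (Alternatively, one could bypass the induction entirely by invoking the one-parameter negative association property of Strongly Rayleigh measures from \cite{bbl09}, which gives $\Pr[\xi_j=1,\,\xi_S=\mathbf{1}_S]\leq \Pr[\xi_j=1]\Pr[\xi_S=\mathbf{1}_S]$ in a single step; I would keep the inductive argument, however, since it relies only on the facts already recorded in this paper.)
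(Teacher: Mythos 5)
Your proof is correct, and it follows the same overall skeleton as the paper's argument: induct on $|S|$, peel off one conditioned coordinate at a time, and use closure of homogeneous Strongly Rayleigh distributions under conditioning on ones (Fact~\ref{fac:srcondclosure}) to keep the induction alive. The one place where you genuinely diverge is the single-coordinate step. The paper derives $\Pr[\xi_j=1\mid\xi_i=1]\le\Pr[\xi_j=1]$ from pairwise negative association of Strongly Rayleigh measures, via the averaging identity $\Pr[\xi_j=1]=\Pr[\xi_j=1\mid\xi_i=1]\Pr[\xi_i=1]+\Pr[\xi_j=1\mid\xi_i=0]\Pr[\xi_i=0]$ — essentially the alternative you mention in your closing parenthesis — and then states the two-part induction hypothesis (marginal bound plus preservation of the Strongly Rayleigh property) without spelling out the step. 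You instead obtain the single-coordinate inequality from the stochastic covering coupling of Fact~\ref{fac:scp}, observing that the coupled unconditional vector dominates the conditional one coordinatewise, so taking expectations of the $j$-th coordinate gives the bound. This buys you a proof that is entirely self-contained relative to the facts recorded in the paper's preliminaries (the paper's appeal to negative association is not stated there as a separate fact, only attributed to \cite{bbl09}), and your write-up is more complete than the paper's in that it carries out the inductive chaining and handles probability-zero conditioning events explicitly; the cost is that you need homogeneity at least $1$ after conditioning to invoke Fact~\ref{fac:scp}, which is exactly where the hypothesis $|S|\le k$ enters, as you correctly note.
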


\begin{proof}
Note that by an immediate consequence of negative association, for any pair $i,j \in [m]$, with $i \neq j$,
$$\Pr[ \xi_j = 1 ~|~ \xi_i = 1 ] \leq \Pr[ \xi_j = 1 ~|~ \xi_i  = 0 ].$$
Hence 
\begin{align*}
\Pr[ \xi_j = 1 ~|~ \xi_i = 1 ] 
\leq & ~  \Pr[ \xi_j = 1 | \xi_i = 1 ] \cdot \Pr[\xi_i = 1 ] + \Pr[ \xi_j = 1 | \xi_i = 0 ] \cdot (1 -  \Pr[\xi_i = 1 ]) \\
=    & ~ \Pr[ \xi_j = 1 ]
\end{align*}
By \cite{bbl09}, the distribution of $\xi_{[m] \backslash \{i\}} \in  \{ 0,1\}^{m-1}$
conditional on $\xi_i = 1$ is Strongly Rayleigh.

With loss of generality, let us order the indices s.t. $S = \{ 1, \ldots, s\}$, where $s
\leq k$. We use $[i]$ to denote $\{1,2, \cdots, i\}$.
Using the above observations, we can now prove the lemma by induction.
The induction hypothesis at the $i$-th step  (where $i \leq k$), is
that the following two statements are true.
\begin{enumerate}
\item $\forall j \in \{i+1, \ldots, m\}. \Pr[ \xi_j = 1 ~|~ \xi_{[i]} = \mathbf{1} ] \leq \Pr[ \xi_j = 1] $
\item The distribution of the vector of random variables $\xi_{m \backslash [i] }
  \in  \{ 0,1\}^{m-i}$ conditional on $\xi_{[i]} = \mathbf{1}$ is Strongly Rayleigh.
\end{enumerate}



\end{proof}


\section*{Acknowledgements}
We thank Yin Tat Lee, Jelani Nelson, Daniel Spielman, Aviad
Rubinstein, and Zhengyu Wang for helpful discussions regarding lower
bound examples.
We also thank Neil Olver for pointing out that our current lower
bounds cannot separate the concentration behavior in the Strongly
Rayleigh case from the independent case, correcting a remark in an
earlier version of this paper. We thank Zhengyu Wang for proof-reading the proof of lower bound and provide some useful comments for presentation.

We want to acknowledge Michael Cohen, who recently passed away.
In personal communication, Michael told Rasmus that he could prove a
concentration result for averaging of spanning trees, although losing
several log factors.
Unfortunately, Michael did not relate the proof to anyone, but did say
it was not based on Strongly Rayleigh distributions.
We hope he would appreciate our approach.

We also want to acknowledge the Simons program ``Bridging Continuous
and Discrete Optimization'' and Nikhil Srivastava whose talks during the
program inspired us to study Strongly Rayleigh questions.

This work was done while Rasmus was a postdoc at Harvard,
and Zhao was a visiting student at Harvard, both hosted by Jelani Nelson.
Rasmus was supported by ONR grant N00014-17-1-2127.

\newpage
\addcontentsline{toc}{section}{References}
\bibliographystyle{alpha}
\bibliography{ref,ref-dkprs}
\newpage
\appendix
\section{Omitted Proofs}\label{sec:appendix}

\begin{fact}\label{fac:two_matrices_sum}
For any two square matrices $A$ and $B$, we have
\begin{align*}
\frac{1}{2} (A+B)^2 + \frac{1}{2} (A-B)^2 = A^2 + B^2
\end{align*}
\end{fact}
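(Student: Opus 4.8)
The plan is to simply expand both squared matrices and observe that the cross terms cancel. The key point to be careful about is that matrix multiplication is noncommutative, so one should not write $(A+B)^2 = A^2 + 2AB + B^2$; instead the correct expansion is $(A+B)^2 = A^2 + AB + BA + B^2$, and similarly $(A-B)^2 = A^2 - AB - BA + B^2$.

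First I would write out these two expansions explicitly. Then, adding them together, the terms $+AB$ and $-AB$ cancel, and likewise $+BA$ and $-BA$ cancel, leaving
\begin{align*}
(A+B)^2 + (A-B)^2 = 2A^2 + 2B^2.
\end{align*}
Dividing both sides by $2$ yields the claimed identity
\begin{align*}
\tfrac{1}{2}(A+B)^2 + \tfrac{1}{2}(A-B)^2 = A^2 + B^2.
\end{align*}

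There is essentially no obstacle here: the statement is a purely formal consequence of the distributive law for matrix multiplication over addition, and crucially it does \emph{not} require $A$ and $B$ to commute, since the noncommuting cross terms $AB$ and $BA$ each appear once with a $+$ sign and once with a $-$ sign. Hence the only thing to watch for is resisting the temptation to combine $AB$ and $BA$ prematurely.
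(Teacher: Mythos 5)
Your proposal is correct and is essentially identical to the paper's proof: both expand $(A\pm B)^2$ as $A^2 \pm AB \pm BA + B^2$ without assuming commutativity and let the cross terms cancel. Nothing further is needed.
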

\begin{proof}
\begin{align*}
 \frac{1}{2} (A+B)^2 + \frac{1}{2} (A-B)^2 
 = & ~ \frac{1}{2} (A + B) (A +B) +  \frac{1}{2} (A-B) (A-B) \\
 = & ~ \frac{1}{2} (A^2 + BA + AB + B^2) + \frac{1}{2} (A^2 - BA - AB + B^2) \\
 = & ~ A^2 + B^2,
\end{align*}
which completes the proof.
\end{proof}

\begin{fact}\label{fac:symmetric_matrix_triangle_inequality}
For any two symmetric matrices
\begin{align*}
(A - B)^2 \preceq 2A^2 + 2B^2.
\end{align*}
\end{fact}
\begin{proof}
Using Fact~\ref{fac:two_matrices_sum}, we have
\begin{align*}
(A+ B)^2 + (A - B)^2 = 2A^2 + 2 B^2.
\end{align*}
Because $A$ and $B$ are symmetric matrices, then $(A+B)^2 \succeq 0$. It implies that
\begin{align*}
(A - B)^2 \preceq 2A^2 + 2 B^2,
\end{align*}
which completes the proof.
\end{proof}

\subsection{Reverse Chernoff bound}

In this Section, we prove that the classical Chernoff bound is tight in some regimes. There are several different proofs, e.g. \cite{m10,slud77,ab09,y12}. For completeness, we provide a proof from \cite{y12}.

\begin{fact}\label{fac:stiring_application}
If $1 \leq l \leq k-1$, then 
\begin{align*}
{ k \choose l } \geq \frac{1}{ e \sqrt{ 2\pi l } } ( \frac{k}{l} )^l ( \frac{k}{k - l} )^{k-l}.
\end{align*}
\end{fact}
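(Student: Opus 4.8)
The plan is to derive the bound directly from a version of Stirling's approximation with explicit constants. Recall the standard two-sided bound $\sqrt{2\pi}\, j^{j+1/2} e^{-j} \le j! \le e\, j^{j+1/2} e^{-j}$, valid for all integers $j \ge 1$. I would apply the lower bound to $k!$ and the upper bound to both $l!$ and $(k-l)!$, so that
\begin{align*}
{k \choose l} = \frac{k!}{l!\,(k-l)!}
\ge \frac{\sqrt{2\pi}\, k^{k+1/2} e^{-k}}{e\, l^{l+1/2} e^{-l} \cdot e\, (k-l)^{k-l+1/2} e^{-(k-l)}}
= \frac{\sqrt{2\pi}}{e^2} \cdot \frac{k^{k+1/2}}{l^{l+1/2} (k-l)^{k-l+1/2}}.
\end{align*}
The exponential factors $e^{-k}$, $e^{l}$, $e^{k-l}$ cancel exactly, which is the key simplification.

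Next I would separate out the polynomial-in-exponent parts from the half-power parts. Writing $k^{k+1/2} = k^k \cdot k^{1/2}$ and similarly for the other two terms, the ratio of the $k^k$-type terms is exactly $(k/l)^l (k/(k-l))^{k-l}$, which is the main factor appearing in the claimed bound. The leftover square-root factor is $\sqrt{k}/(\sqrt{l}\sqrt{k-l}) = \sqrt{k/(l(k-l))}$. Since $1 \le l \le k-1$ we have $k - l \ge 1$, hence $k/(l(k-l)) \ge k/(l \cdot k) \cdot \frac{k-l}{k-l}$; more simply, $\sqrt{k/(l(k-l))} \ge \sqrt{k}/(\sqrt{l}\sqrt{k}) = 1/\sqrt{l}$, using $k - l \le k$. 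Combining,
\begin{align*}
{k \choose l} \ge \frac{\sqrt{2\pi}}{e^2} \cdot \frac{1}{\sqrt{l}} \cdot \left(\frac{k}{l}\right)^l \left(\frac{k}{k-l}\right)^{k-l}
= \frac{\sqrt{2\pi}}{e^2 \sqrt{l}} \left(\frac{k}{l}\right)^l \left(\frac{k}{k-l}\right)^{k-l}.
\end{align*}

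Finally I would check the constant: we need $\sqrt{2\pi}/(e^2) \ge 1/(e\sqrt{2\pi})$, i.e. $2\pi \ge e$, which holds since $2\pi > 6 > e$. Hence $\frac{\sqrt{2\pi}}{e^2} \ge \frac{1}{e\sqrt{2\pi}}$ and the claimed inequality ${k \choose l} \ge \frac{1}{e\sqrt{2\pi l}} (k/l)^l (k/(k-l))^{k-l}$ follows. I do not expect any real obstacle here; the only minor care needed is ensuring the Stirling bounds I invoke hold at the endpoints $l = 1$ and $l = k-1$ (they do, since they hold for all integers $\ge 1$), and getting the direction of every inequality right when bounding the square-root factor. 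If one prefers to avoid invoking the explicit-constant Stirling bound as a black box, the alternative is to cite it from a standard reference, which is the route suggested by the surrounding text's reference to \cite{y12}.
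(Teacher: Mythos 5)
Your proof is correct and follows essentially the same route as the paper: apply Stirling's approximation with explicit constants to the three factorials, cancel the exponentials to extract the main factor $(k/l)^l\bigl(\tfrac{k}{k-l}\bigr)^{k-l}$, use $k-l\le k$ to reduce the square-root factor to $1/\sqrt{l}$, and check the constant (your $\sqrt{2\pi}/e^2\ge 1/(e\sqrt{2\pi})$ versus the paper's $e^{-1}$ correction from the Robbins error terms). The only difference is bookkeeping of the explicit Stirling constants, not the argument itself.
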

\begin{proof}
By Stirling's approximation, $i! = \sqrt{ 2\pi i} (i/e)^i e^{\lambda}$
for some $\lambda \in [ 1 / ( 12 i + 1 ) , 1/ ( 12 i ) ]$.

Thus, 
\begin{align*}
{k \choose l} 
= & ~ \frac{ k ! }{ l ! ( k - l ) ! } \\
\geq & ~ \frac{ \sqrt{2\pi k } ( \frac{k}{e} )^k }{ \sqrt{2\pi l} ( \frac{l}{e} )^l \sqrt{ 2\pi (k - l) } ( \frac{ k - l }{e} )^{k-l} } \exp( \frac{1}{12k+1} -\frac{1}{12 l} - \frac{1}{12 (k-l)} ) \\
\geq & ~ \frac{ \sqrt{2\pi k } ( \frac{k}{e} )^k }{ \sqrt{2\pi l} ( \frac{l}{e} )^l \sqrt{ 2\pi (k - l) } ( \frac{ k - l }{e} )^{k-l} } e^{-1} \\
\geq & ~ \frac{1}{ \sqrt{2 \pi l} } ( \frac{k}{l} )^l ( \frac{ k }{ k - l } )^{k - l} e^{-1} .
\end{align*}
where the first step follows by definition, the second step follows by Stirling's approximation, the third step follows by $\frac{1}{ 1 + a + b } + 1 \geq \frac{1}{a} + \frac{1}{b}$ for $a\geq 12, b \geq 12$.

\end{proof}

Now, we are ready to prove the following result
\begin{lemma}[\cite{m10,slud77,ab09,y12}]\label{lem:reverse_chernoff_bound}
Let $X$ be the average of $k$ independent, Bernoulli random variables with mean $p$. For any $\epsilon \in (0,1/2]$ and $p \in (0,1/2]$, assuming $\epsilon^2 p k \geq 3$, we have
\begin{align*}
\Pr[ X \leq ( 1 - \epsilon ) p ] \geq \exp( - 9 \epsilon^2 p k ),
\end{align*}
and
\begin{align*} 
\Pr[ X \geq ( 1 + \epsilon ) p ] \geq \exp( - 9 \epsilon^2 p k ).
\end{align*}
\end{lemma}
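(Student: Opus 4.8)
The plan is to prove both tail inequalities by the classical anti-concentration route: restrict the binomial sum to a block of point masses and lower bound each one with the Stirling-type estimate in Fact~\ref{fac:stiring_application}. Write $kX \sim \mathrm{Binomial}(k,p)$, so $\Pr[X \ge (1+\epsilon)p] = \sum_{l \ge (1+\epsilon)pk} \binom{k}{l} p^l (1-p)^{k-l}$. The naive first move is to keep the single term $l = \lceil (1+\epsilon)pk\rceil$: Fact~\ref{fac:stiring_application} gives $\binom{k}{l}p^l(1-p)^{k-l} \ge \frac{1}{e\sqrt{2\pi l}}\, e^{-kD(l/k\,\|\,p)}$ with $D(a\,\|\,p) = a\ln(a/p) + (1-a)\ln((1-a)/(1-p))$, and a Taylor estimate controls $kD$ by $O(\epsilon^2 pk)$. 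Unfortunately the Stirling prefactor $1/\sqrt{2\pi l} = \Theta(1/\sqrt{pk})$ is not negligible: when $pk$ is large while $\epsilon^2 pk$ stays near the threshold $3$, this single term can fall well below $e^{-9\epsilon^2 pk}$, so a single term is \emph{not} enough.

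The fix is to keep a whole block of consecutive indices. For the upper tail I would sum over the integers $l$ in the interval $I = [(1+\epsilon)pk,\ (1+\tfrac32\epsilon)pk]$. Since $\epsilon \le \tfrac12$ and $p \le \tfrac12$, the right endpoint is at most $\tfrac78 k$, so every $l \in I$ satisfies $1 \le l \le k-1$ and Fact~\ref{fac:stiring_application} applies to it; and since $\epsilon^2 pk \ge 3$ forces $\epsilon pk \ge 6$, the interval $I$, of length $\tfrac12\epsilon pk$, contains at least $\tfrac14\epsilon pk$ integers. (For the lower tail I would instead take the integers in $[(1-\tfrac32\epsilon)pk,\ (1-\epsilon)pk] \subseteq [\tfrac14 pk,\ pk] \subseteq [1,k-1]$, which again contains $\ge \tfrac14\epsilon pk$ integers.)

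For each such $l$ I write $l/k = (1+t)p$ with $t \in [\epsilon, \tfrac32\epsilon] \subseteq (0,\tfrac34]$. The arithmetic core is the relative-entropy bound $D((1+t)p\,\|\,p) \le 2t^2 p$, obtained from $\ln(1+t) \le t - \tfrac12 t^2 + \tfrac13 t^3$, from $\ln(1-y) \le -y - \tfrac12 y^2$ applied with $y = tp/(1-p) < 1$, and from $p/(1-p) \le 1$; since $t \le \tfrac32\epsilon$ this gives $kD(l/k\,\|\,p) \le \tfrac92\epsilon^2 pk$. Combining this with $l \le \tfrac74 pk$ (so $\tfrac{1}{e\sqrt{2\pi l}} \ge \tfrac{1}{2e\sqrt{\pi pk}}$) and the count of terms,
\[
\Pr[X \ge (1+\epsilon)p] \ \ge\ \frac{\epsilon pk}{4}\cdot\frac{1}{2e\sqrt{\pi pk}}\cdot e^{-\frac92\epsilon^2 pk} \ =\ \frac{\sqrt{\epsilon^2 pk}}{8e\sqrt{\pi}}\, e^{-\frac92\epsilon^2 pk} \ \ge\ \frac{\sqrt{3}}{8e\sqrt{\pi}}\, e^{-\frac92\epsilon^2 pk}\,,
\]
where $\epsilon^2 pk \ge 3$ is exactly what lets $\epsilon\sqrt{pk}=\sqrt{\epsilon^2 pk}$ absorb the $1/\sqrt{pk}$ loss from Stirling. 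Since $\tfrac{\sqrt 3}{8e\sqrt\pi} \ge e^{-9/2} \ge e^{-\frac92\epsilon^2 pk}$ (again using $\epsilon^2 pk \ge 1$), the right-hand side is at least $e^{-9\epsilon^2 pk}$, proving the first bound; the lower-tail bound follows identically from the symmetric estimate $D((1-t)p\,\|\,p) \le 2t^2 p$.

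I expect the main obstacle to be precisely the bookkeeping around the Stirling prefactor, i.e. realizing that a single point mass does not suffice in the regime of huge $pk$ and tiny $\epsilon$, and that summing a block of $\Theta(\epsilon pk)$ consecutive terms is what makes the constant $9$ work uniformly over the whole allowed range (with slack to spare). The only other point needing care is verifying the relative-entropy Taylor inequality $D((1\pm t)p\,\|\,p)\le 2t^2p$ with an explicit clean constant over the full range $t \le \tfrac34$, $p \le \tfrac12$, so that the KL argument of the exponent stays below $\tfrac92\epsilon^2 pk$; this is routine but must be done carefully to keep the final constant honest.
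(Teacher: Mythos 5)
Your proposal is correct and is essentially the paper's own argument: both proofs lower bound the tail by a block of $\Theta(\epsilon pk)$ consecutive binomial point masses adjacent to the threshold, apply the Stirling estimate of Fact~\ref{fac:stiring_application} to bound each mass, control the exponential part by an entropy/Taylor computation giving roughly $e^{-O(\epsilon^2 pk)}$, and use $\epsilon^2 pk \geq 3$ to absorb the $1/\sqrt{pk}$ prefactor into the exponent. The only (cosmetic) difference is that the paper reduces the block to its smallest term via monotonicity of the point masses and then splits that term into a prefactor $A$ and an entropy factor $B$, whereas you apply the Stirling-plus-KL bound to every term in the block directly.
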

\begin{proof}
Note that $\Pr[ X \leq ( 1 - \epsilon ) p ]$ equals the sum $\sum_{i=0}^{ \lfloor ( 1 - \epsilon ) p k \rfloor } \Pr[ X = i / k ]$, and $\Pr[ X = i / k ] = { k \choose i } p^i ( 1 - p )^{k - i}$.

Fix $l = \lfloor ( 1 - 2 \epsilon ) p k \rfloor + 1$. The terms in the sum are increasing, so the terms with index $i\geq \ell$ each have value at least $\Pr[ X = l / k ]$, so their sum has total value at least $( \epsilon p k - 2 ) \Pr[ X = l /k ] $. To complete the proof, we show that
\begin{align*}
( \epsilon p k - 2 ) \Pr [ X = l / k ] \geq \exp ( - 9 \epsilon^2 p k ).
\end{align*}

The assumptions $\epsilon^2 p k \geq 3$ and $ \epsilon \leq 1 / 2 $ give $\epsilon p k \geq 6$, so we have
\begin{align*}
( \epsilon p k - 2 ) \Pr [ X = l / k ] \geq & ~ \frac{2}{3} \epsilon p k { k \choose l } p^l ( 1 - p )^{k - l} \\
\geq & ~ \underbrace{ \frac{2}{3} \epsilon p k \frac{1}{ \sqrt{ 2 \pi l } } }_{ A } \cdot \underbrace{ ( \frac{k}{l} )^l ( \frac{ k }{ k - l } )^{k - l} p^{ l } ( 1 - p )^{k - l} }_{ B } 
\end{align*}
Below, we will show that $A \geq \exp( - \epsilon^2 p k )$ and $B \geq \exp( - 8 \epsilon^2 p k )$.

\begin{claim}
$A \geq \exp( - \epsilon^2 p k )$.
\end{claim}
\begin{proof}
The assumptions $\epsilon^2 p k \geq 3$ and $\epsilon \leq 1/2$ imply that $pk \geq 12$.

By $l \leq pk + 1$ (from definition), and $pk \geq 12$, thus $l \leq 1.1 pk$.

Therefore, we have
\begin{align*}
A \geq & ~ \frac{2}{3e} \epsilon \sqrt{ p k / (2.2\pi) } \\
\geq & ~ \frac{2}{3e} \sqrt{ 3 / (2.2\pi) } & \text{~by~} \epsilon \sqrt{pk} \geq \sqrt{3} \\
\geq & ~ 0.1 \\
\geq & ~ \exp(-3) \\
\geq & ~ \exp( - \epsilon^2 p k ).
\end{align*}
This completes the proof of the claim.
\end{proof}

\begin{claim}
$B \geq \exp( - 8 \epsilon^2 p k )$.
\end{claim}
\begin{proof}
Fix $\delta$ such that $ l = ( 1 - \delta ) p k$. The choice of $l$
implies $\delta \leq 2 \epsilon$, so the claim will hold as long as $B
\geq \exp( - 2 \delta^2 pk )$.
Manipulting this latter inequality, we get
\begin{align*}
& ~ B^{ - 1 / l } \leq \exp( \frac{ 2 \delta^2 p k }{ l } ) \\
\iff & ~ \frac{ l }{ pk } ( \frac{ k - l }{ ( 1 - p ) k } )^{k / l - 1} \leq \exp( \frac{ 2 \delta^2 pk }{ l } ).
\end{align*}
Substituting $l = ( 1 - \delta ) p k$ and simplifying, it is equivalent to 
\begin{align*}
(1- \delta ) ( 1 + \frac{ \delta p }{ 1 - p } )^{ \frac{ 1 }{ ( 1 - \delta ) p } - 1 } \leq \exp( \frac{ 2 \delta^2 }{ 1 - \delta } ).
\end{align*}
Taking the logarithm of both sides, we have
\begin{align*}
\ln ( 1 - \delta ) + ( \frac{ 1 }{ ( 1 - \delta ) p } - 1 ) (  \ln  ( 1 + \frac{ \delta p }{ 1 - p } ) ) \leq \frac{ 2 \delta^2 }{ 1 - \delta }
\end{align*}
Since $\ln (1+z) \leq z$, it suffices to prove
\begin{align*}
& ~ - \delta + ( \frac{ 1 }{ ( 1 - \delta ) p } - 1 ) (  \frac{ \delta p }{ 1 - p } ) \leq \frac{ 2 \delta^2 }{ 1 - \delta } \\
\iff & ~ \frac{ \delta^2 }{ ( 1 - p ) ( 1 - \delta ) } \leq \frac{2 \delta^2 }{1 -\delta}.
\end{align*}
Since $p\leq 1/2$, this finishes the proof of the claim.
\end{proof}
Combining the above two claims, we obtain the desired probability lower bound for
the $(1-\epsilon)$ side. We can prove it for the $(1+\epsilon)$ side similarly.
\end{proof}



\end{document}